\newcommand{\numberseries}{\mdseries}   
\newlength{\thmtopspace}                
\newlength{\thmbotspace}                
\newlength{\thmheadspace}               
\newlength{\thmindent}                  
\renewcommand{\subparagraph}{\vspace*{\thmbotspace}}
\newtheoremstyle{bfupright head,slanted body}
                {\thmtopspace}{\thmbotspace}
                {\slshape}{\thmindent}{\bfseries}{.}{\thmheadspace}
                {{\numberseries \thmnumber{(#2) }}\thmnote{#3}}
\newtheoremstyle{bfupright head,upright body}
                {\thmtopspace}{\thmbotspace}
                {\upshape}{\thmindent}{\bfseries}{.}{\thmheadspace}
                {{\numberseries \thmnumber{(#2) }}\thmnote{#3}}
\newtheoremstyle{bfit head,upright body}
                {\thmtopspace}{\thmbotspace}
                {\upshape}{\thmindent}{\upshape}{.}{\thmheadspace}
                {{\numberseries\thmnumber{(#2) }}
                {\bfseries\itshape\thmnote{\negthickspace#3}}}
\newtheoremstyle{it head,upright body}
                {\thmtopspace}{\thmbotspace}
                {\upshape}{\thmindent}{\upshape}{.}{\thmheadspace}
                {{\numberseries\thmnumber{(#2) }}
                {\itshape\thmnote{\negthickspace#3}}}
\newtheoremstyle{fixed bf head,slanted body}
                {\thmtopspace}{\thmbotspace}{\slshape}
                {\thmindent}{\bfseries}{.}{\thmheadspace}
                {{\numberseries \thmnumber{(#2) }}\thmname{#1}\thmnote{ (#3)}}
\newtheoremstyle{fixed bf head,upright body}
                {\thmtopspace}{\thmbotspace}{\upshape}
                {\thmindent}{\bfseries}{.}{\thmheadspace}
                {{\numberseries \thmnumber{(#2) }}\thmname{#1}\thmnote{ (#3)}}
\newtheoremstyle{fixed bfit head,upright body}
                {\thmtopspace}{\thmbotspace}{\upshape}
                {\thmindent}{\bfseries\itshape}{.}{\thmheadspace}
                {{\numberseries \thmnumber{(#2) }}\thmname{#1}\thmnote{ (#3)}}
\newtheoremstyle{sc head,small body}
                {\thmtopspace}{\thmbotspace}
                {\small\upshape}{\thmindent}{\scshape}{.}{\thmheadspace}
                {\thmname{#1}}
\newtheoremstyle{numbered paragraph}
                {\thmtopspace}{\thmbotspace}{\upshape}
                {\thmindent}{\upshape}{}{0pt}
                {{\numberseries \thmnumber{(#2) }}}
\newtheoremstyle{unnumbered paragraph}
                {\thmtopspace}{\thmbotspace}{\upshape}
                {\parindent}{\upshape}{}{0pt}
\theoremstyle{bfupright head,slanted body}
\newtheorem{res}{}[section]             \newtheorem*{res*}{}
\theoremstyle{bfit head,upright body}
                 \newtheorem*{com*}{}
\theoremstyle{bfupright head,upright body}
\newtheorem{bfhpg}[res]{}               \newtheorem*{bfhpg*}{}
\theoremstyle{it head,upright body}
               \newtheorem*{ithpg*}{}
\theoremstyle{sc head,small body}
\theoremstyle{fixed bf head,slanted body}
\newtheorem{thm}[res]{Theorem}          \newtheorem*{thm*}{Theorem}
\newtheorem{prp}[res]{Proposition}      \newtheorem*{prp*}{Proposition}
\newtheorem{cor}[res]{Corollary}        \newtheorem*{cor*}{Corollary}
\newtheorem{lem}[res]{Lemma}            \newtheorem*{lem*}{Lemma}
\theoremstyle{fixed bf head,upright body}
\newtheorem{dfn}[res]{Definition}       \newtheorem*{dfn*}{Definition}
     \newtheorem*{con*}{Construction}
\newtheorem{obs}[res]{Observation}      \newtheorem*{obs*}{Observation}
\newtheorem{rmk}[res]{Remark}           \newtheorem*{rmk*}{Remark}
\newtheorem{exa}[res]{Example}          \newtheorem*{exa*}{Example}
         \newtheorem*{exe*}{Exercise}
            \newtheorem{stp*}{Setup}
         \newtheorem*{qst*}{Question}
\theoremstyle{numbered paragraph}
\theoremstyle{unnumbered paragraph}
\newtheorem{ipg*}{}
\newlength{\thmlistleft}        
\newlength{\thmlistright}       
\newlength{\thmlistpartopsep}   
\newlength{\thmlisttopsep}      
\newlength{\thmlistparsep}      
\newlength{\thmlistitemsep}     
\newcounter{eqc} 
\newenvironment{eqc}{\begin{list}{\upshape (\textit{\roman{eqc}})}%
    {\usecounter{eqc}%
      \setlength{\leftmargin}{\thmlistleft}%
      \setlength{\labelwidth}{\thmlistleft}%
      \setlength{\rightmargin}{\thmlistright}%
      \setlength{\partopsep}{\thmlistpartopsep}%
      \setlength{\topsep}{\thmlisttopsep}%
      \setlength{\parsep}{\thmlistparsep}%
      \setlength{\itemsep}{\thmlistitemsep}}}%
  {\end{list}}%
\newcounter{prt}
\newenvironment{prt}{\begin{list}{\upshape (\alph{prt})}%
    {\usecounter{prt}%
      \setlength{\leftmargin}{\thmlistleft}%
      \setlength{\labelwidth}{\thmlistleft}%
      \setlength{\rightmargin}{\thmlistright}%
      \setlength{\partopsep}{\thmlistpartopsep}%
      \setlength{\topsep}{\thmlisttopsep}%
      \setlength{\parsep}{\thmlistparsep}%
      \setlength{\itemsep}{\thmlistitemsep}}}%
  {\end{list}}%
\newcommand{\prtlbl}[1]{{\upshape(#1)}}
\newcounter{rqm}
\newenvironment{rqm}{\begin{list}{\upshape (\arabic{rqm})}%
    {\usecounter{rqm}%
      \setlength{\leftmargin}{\thmlistleft}%
      \setlength{\labelwidth}{\thmlistleft}%
      \setlength{\rightmargin}{\thmlistright}%
      \setlength{\partopsep}{\thmlistpartopsep}%
      \setlength{\topsep}{\thmlisttopsep}%
      \setlength{\parsep}{\thmlistparsep}%
      \setlength{\itemsep}{\thmlistitemsep}}}%
  {\end{list}}%
\newenvironment{prf*}[1][Proof]{%
  \begin{proof}[\bf #1]
    \setcounter{equation}{0}
    \renewcommand{\theequation}{\arabic{equation}}}
  {\end{proof}
}
  \newcommand{\proofoftag}[2][:]{(#2)#1}
\newcommand{\pgref}[1]{(\ref{#1})}
\newcommand{\thmref}[2][Theorem~]{#1\pgref{thm:#2}}
\newcommand{\corref}[2][Corollary~]{#1\pgref{cor:#2}}
\newcommand{\prpref}[2][Proposition~]{#1\pgref{prp:#2}}
\newcommand{\lemref}[2][Lemma~]{#1\pgref{lem:#2}}
\newcommand{\obsref}[2][Observation~]{#1\pgref{obs:#2}}
\newcommand{\dfnref}[2][Definition~]{#1\pgref{dfn:#2}}
\newcommand{\exaref}[2][Example~]{#1\pgref{exa:#2}}
\newcommand{\rmkref}[2][Remark~]{#1\pgref{rmk:#2}}
\newcommand{\secref}[2][Section~]{#1\ref{sec:#2}}
\renewcommand{\eqref}[1]{\pgref{eq:#1}}
\def\@nobreak@#1{\mathchoice%
  {\nobreakdef@\displaystyle\f@size{#1}}%
  {\nobreakdef@\nobreakstyle\tf@size{\firstchoice@false #1}}%
  {\nobreakdef@\nobreakstyle\sf@size{\firstchoice@false #1}}%
  {\nobreakdef@\nobreakstyle\ssf@size{\firstchoice@false #1}}%
  \check@mathfonts}%
\def\nobreakdef@#1#2#3{\hbox{{%
                    \everymath{#1}%
                    \let\f@size#2\selectfont%
                    #3}}}%
\numberwithin{equation}{res}
\newcommand{\Spec}{\operatorname{Spec}}
\newcommand{\Supp}{\operatorname{Supp}}
\newcommand{\Ann}{\operatorname{Ann}}
\newcommand{\depth}{\operatorname{depth}}
\newcommand{\End}{\operatorname{End}}
\newcommand{\Hom}{\operatorname{Hom}}
\newcommand{\Ext}{\operatorname{Ext}}
\renewcommand{\Im}{\operatorname{Im}}
\newcommand{\Ker}{\operatorname{Ker}}
\newcommand{\Coker}{\operatorname{Coker}}
\newcommand{\M}{\operatorname{M}}
\newcommand{\D}{\mathsf{D}}
\newcommand{\RHom}{\mathbf{R}\!\operatorname{Hom}}
\def\widebardisplay#1{%
  \setbox0=\hbox{$\displaystyle #1$}
  \dimen0=\wd0%
  \advance\dimen0 by -4pt
  \vbox{%
    \nointerlineskip%
    \moveright 2pt 
    \vbox{\hrule width \dimen0}%
    \nointerlineskip%
    \kern 1pt
    \box0%
    }%
  }
\def\widebartext#1{%
  \setbox0=\hbox{$#1$}
  \dimen0=\wd0%
  \advance\dimen0 by -4pt
  \vbox{%
    \nointerlineskip%
    \moveright 2pt 
    \vbox{\hrule width \dimen0}%
    \nointerlineskip%
    \kern 1pt
    \box0%
    }%
  }
\def\widebarscript#1{%
  \setbox0=\hbox{$\scriptstyle #1$}
  \dimen0=\wd0%
  \advance\dimen0 by -3pt
  \vbox{%
    \nointerlineskip%
    \moveright 1.5pt 
    \vbox{\hrule width \dimen0}%
    \nointerlineskip%
    \kern .8pt
    \box0%
    }%
  }
\def\widebarscriptscript#1{%
  \setbox0=\hbox{$\scriptscriptstyle #1$}
  \dimen0=\wd0%
  \advance\dimen0 by -2pt
  \vbox{%
    \nointerlineskip%
    \moveright 1pt 
    \vbox{\hrule width \dimen0}%
    \nointerlineskip%
    \kern .6pt
    \box0%
    }%
  }
\begin{document}

\title[Construction of totally reflexive modules]{Construction of
  totally reflexive modules \\ from an exact pair of zero divisors}

\author{Henrik Holm \ } 
\address{Department of Basic Sciences and Environment, Faculty of Life
  Sciences, University of Copenhagen, Thorvaldsensvej 40, DK-1871
  Frederiksberg C, Denmark}
\email{hholm@life.ku.dk} 
\urladdr{http://www.matdat.life.ku.dk/\~{}hholm/}




\keywords{Computation of module of homomorphisms; exact pair of zero
  divisors; indecomposability; isomorphism class; totally reflexive
  module}

\subjclass[2000]{13C13, 13H99}

\begin{abstract}
  Let $A$ be a local ring which admits an exact pair $x,y$ of zero
  divisors as defined by Henriques and {\c{S}}ega. Assuming that this
  pair is regular and that there exists a regular element on the
  $A$-module $A/(x,y)$, we explicitly construct an infinite family of
  non-isomorphic indecomposable totally reflexive $A$-modules. In this
  setting, our construction provides an answer to a question raised by
  Christensen, Piepmeyer, Striuli, and Takahashi.  Furthermore, we
  compute the module of homomorphisms between any two given modules
  from the infinite family mentioned above.
\end{abstract}

\maketitle

\section{Introduction}

\noindent
Throughout this paper, $A$ is a commutative noetherian local ring.

As indicated by the title, this paper is concerned with explicit
constructions of totally reflexive $A$-modules, as defined by
Auslander~\cite{MAs67} in 1967 (the terminology ``totally reflexive''
was introduced in 2002 by Avramov and Martsinkovsky \cite{LLAAMr02}).

Totally reflexive $A$-modules always exist, indeed, $A$ itself is
totally reflexive. However, in general, there need not exist non-free
totally reflexive $A$-modules. If $A$ is Gorenstein then the totally
reflexive $A$-modules are exactly the maximal Cohen-Macaulay modules,
and their representation theory is a classical field of study. A main
result of Christensen, Piepmeyer, Striuli, and Takahashi \cite[Theorem
B]{CPST-08} asserts that if $A$ is not Gorenstein, then existence of
one non-free totally reflexive $A$-module implies the existence of
infinitely many non-isomorphic indecomposable totally reflexive
$A$-modules. Unfortunately, the known proof of this interesting result
is not constructive, which is why the authors of \cite{CPST-08} raise
the following.

\begin{qst*}[{\cite[(4.8)]{CPST-08}}]
  Assume that $A$ is not Gorenstein, and that there exists a non-free
  totally reflexive $A$-module. Are there constructions that produce
  infinite families of non-isomorphic indecomposable totally reflexive
  $A$-modules?
\end{qst*}

Under suitable assumptions, we give in this paper exactly such a
construction. However, before we reveal the details of our
construction, we will mention a couple of related results from the
literature.

Assume that $A$ is complete or has an uncountable residue field.
Assume furthermore that there exist a prime ideal $\mathfrak{p}$ in
$A$ with $\operatorname{grade}\mathfrak{p}>0$ and $\dim
A/\mathfrak{p}>1$, and a totally reflexive $A$-module $M$ such that
$M_{\mathfrak{p}}$ is not $A_{\mathfrak{p}}$-free. Then Takahashi
\cite{RTk07a} proves the existence of uncountably many non-isomorphic
indecomposable totally reflexive $A$-modules. The proof is not
constructive, but in \cite[Example 4.3]{RTk07a} one does findi an example
(\mbox{$A=Q[\hspace{-1.4pt}[x,y,z]\hspace{-1.4pt}]/(x^2)$}, where $Q$
is a complete local domain which is not a field) where uncountably
many non-isomorphic indecomposable totally reflexive $A$-modules are
actually constructed.

Assume that $A$ has an embedded deformation of codimension \mbox{$c
  \geqslant 2$}, i.e.~there~is a local ring $(Q,\mathfrak{q},k)$ and a
$Q$-regular sequence $\boldsymbol{x}$ in $\mathfrak{q}^2$ of length
$c$ such that \mbox{$A\cong Q/(\boldsymbol{x})$}. Avramov, Gasharov,
and Peeva \cite{AGP-97} constructs a non-free totally reflexive
$A$-module $G$, from which infinitely many non-isomorphic
indecomposable totally reflexive $A$-modules can be constructed by
using results of Avramov and Iyengar \cite{LLASBI07}:

Consider the graded ring \mbox{$\mathcal{R}=k[\chi_1,\ldots,\chi_c]$}
where $\chi_i$ has degree $2$.  From \cite[4.1]{LLASBI07} and
\cite[proof of 7.4(1)]{LLASBI07} it follows that given a closed subset
\mbox{$X \subseteq \Spec \mathcal{R}$} there is a module \mbox{$M_X
  \in \mathsf{Thick}_A(G\oplus A) \subseteq \mathsf{D}(A)$} with
cohomological support $\Supp^*(M_X,k)$ equal to $X$.  As \mbox{$M_X
  \in \mathsf{Thick}_A(G\oplus A)$} some syzygy $G_X$ of $M_X$ is
totally reflexive, and by \cite[4.2]{LLASBI07} one also has
\mbox{$\Supp^*(G_X,k) = X$}. Clearly, if \mbox{$X \neq Y$} then $G_X$
and~$G_Y$ are not isomorphic.  Furthermore, if $X$ is irreducible, it
follows by \cite[(3.6.2)]{LLASBI07} that $\Supp^*(\tilde{G}_X,k) = X$
for some indecomposable summand $\tilde{G}_X$ of $G_X$.

In the author's opinion, the construction of $M_X$ given in
\cite[proof of 4.1]{LLASBI07} is not as explicit and accessible as one
could hope for. Under suitable assumptions, this paper offers a
construction of infinitely many non-isomorphic indecomposable totally
reflexive $A$-modules, which is different from the ones mentioned
above.  Our construction has the advantage of being fairly elementary;
it applies to large classes of examples, cf.~ \exaref{fg} and
\lemref{depth2}; and it makes explicit computations of relevant
$\Hom$-modules possible.

Given an exact pair of zero divisors $x,y$ in $A$, as defined by
Henriques and {\c{S}}ega \cite{HS}, it is easily seen that $A/(x)$ and
$A/(y)$ are non-free totally reflexive $A$-modules.  Thus, if $A$ is
not Gorenstein, one should by the main result in \cite{CPST-08} be
able to construct infinitely many non-isomorphic indecomposable
totally reflexive $A$-modules. In this paper, we define for each $a$
in $A$ two totally reflexive $A$-modules $G_a$ and $H_a$ as cokernels
of certain $2\!\times\!2$ matrices with entries in $A$. A special case
of our main \thmref{main} reads as follows.

\begin{thm*}
  Let \mbox{$x,y \in A$} be a regular exact pair of zero
  divisors, and let $a \in A$. If $a$ is regular on the $A$-module
  $A/(x,y)$ then
  \begin{displaymath}
    G_{a}, G_{a^2}, G_{a^3}, G_{a^4}, \ldots, 
    H_{a}, H_{a^2}, H_{a^3}, H_{a^4}, \ldots
  \end{displaymath}
  is a (double) infinite family of non-isomorphic, indecomposable,
  non-free, totally reflexive $A$-modules.  Furthermore, one has the
  following identities,
  \begin{align*}
    \Hom_A(H_{a_m},G_{a^n}) \cong \Hom_A(H_{a^n},G_{a^m}) &\cong\, G_{a^{m+n}},
    \\
    \Hom_A(G_{a^n},H_{a^m}) \cong \Hom_A(G_{a^m},H_{a^n}) &\cong\, H_{a^{m+n}},
    \\
    \Hom_A(G_{a^m},G_{a^n}) \cong \Hom_A(H_{a^n},H_{a^m}) &\cong 
    \left\{\!\!
      \begin{array}{ccl}
        H_{a^{m-n}} & \text{for} & m>n \\
        A & \text{for} & m=n \\
        G_{a^{n-m}} & \text{for} & m<n
      \end{array}
    \right.\!\!.
  \end{align*}
\end{thm*}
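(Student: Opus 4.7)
The plan is to build each module as the cokernel of an explicit $2\times 2$ matrix with entries drawn from $\{x,\,y,\,\pm a^{n}\}$, and then read every claim off the matrix structure. With $xy=0$ and the exact pair property $(0:x)=(y)$, $(0:y)=(x)$, the matrix $\alpha_{n}$ presenting $G_{a^{n}}$ and the matrix $\beta_{n}$ presenting $H_{a^{n}}$ satisfy $\alpha_{n}\beta_{n}=0=\beta_{n}\alpha_{n}$. Thus
\[
\cdots\xrightarrow{\beta_{n}}A^{2}\xrightarrow{\alpha_{n}}A^{2}\xrightarrow{\beta_{n}}A^{2}\xrightarrow{\alpha_{n}}A^{2}\to G_{a^{n}}\to 0
\]
is a doubly infinite two-periodic complex of free modules. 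Acyclicity at each spot reduces to solving a pair of equations of the form $xu+a^{n}v=0$ and $yv=0$; the exact pair property forces $v=xw$ for some $w$, after which $x(u+a^{n}w)=0$ places $u+a^{n}w$ in $(y)$, exhibiting $(u,v)$ in the image of the next matrix. The $A$-dual complex has the same shape and is acyclic by the same argument, so $G_{a^{n}}$ and $H_{a^{n}}$ are totally reflexive; they are non-free because their presentation matrices have entries in the maximal ideal.

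Next I would compute the Hom modules. The presentation of $G_{a^{m}}$ yields
\[
\Hom_{A}(G_{a^{m}},N)\;\cong\;\{(u,v)\in N^{2}\mid xu=0\text{ and }a^{m}u+yv=0\text{ in }N\}
\]
for any $A$-module $N$, and similarly for $H_{a^{m}}$. Taking $N$ to be $G_{a^{n}}$, $H_{a^{n}}$, or $A$ and substituting the two-periodic free presentations, one solves these systems. The exact pair property lets one rewrite $u$ and $v$ inside $N$ as multiples of $x$ or $y$, reducing everything to a single equation modulo $(x,y)$; the hypothesis that $a$ is regular on $A/(x,y)$ is precisely what is needed to match and cancel powers of $a$ on the two sides. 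Carrying out the bookkeeping in all five combinations of $G$ and $H$, and in the $\Hom_{A}(G_{a^{m}},G_{a^{n}})$ case in the three subcases $m<n$, $m=n$, $m>n$, identifies each Hom module as the cokernel of $\alpha_{k}$ or $\beta_{k}$ with $k=m+n$ or $k=|m-n|$, matching the claimed formulas.

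The remaining properties are consequences. Setting $m=n$ in the last formula gives $\End_{A}(G_{a^{n}})\cong A$, a local ring, so $G_{a^{n}}$ is indecomposable; likewise for $H_{a^{n}}$. For non-isomorphism: if $G_{a^{m}}\cong G_{a^{n}}$ with $m>n$, then $\Hom_{A}(G_{a^{m}},G_{a^{n}})\cong\End_{A}(G_{a^{n}})\cong A$, but the formulas read $\Hom_{A}(G_{a^{m}},G_{a^{n}})\cong H_{a^{m-n}}$, forcing $H_{a^{m-n}}$ to be free of rank one and contradicting non-freeness; a parallel argument with $\Hom_{A}(H_{a^{m}},G_{a^{n}})\cong G_{a^{m+n}}$ separates the $G$'s from the $H$'s. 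The main obstacle is step two: the matrix bookkeeping is lengthy, and one must invoke the regularity of $a$ on $A/(x,y)$ precisely at the point where a common factor of $a^{\min(m,n)}$ has to be cancelled from a congruence modulo $(x,y)$.
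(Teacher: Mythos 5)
Your proposal follows essentially the same route as the paper: present $G_{a^n}$ and $H_{a^n}$ by the $2\times 2$ matrices $\gamma_{a^n}$ and $\eta_{a^n}$, prove total reflexivity via the acyclic $2$-periodic complex and its equally acyclic $A$-dual, deduce non-freeness from the fact that the presentation is already minimal, compute the $\Hom$-modules by explicit matrix bookkeeping, get indecomposability by reading off $\End_A(G_{a^n})\cong A$ from the $m=n$ case, and separate isomorphism classes by comparing $\Hom$-modules to $A$. The one organizational difference is how you set up the $\Hom$ computation: you describe $\Hom_A(G_{a^m},N)$ as the kernel of $\gamma_{a^m}^{\,t}$ acting on $N^2$ and then specialize $N$, whereas the paper (Observation 7.1 and Proposition 7.2, following Greuel--Pfister) lifts everything to chain maps between the two free presentations modulo homotopy, so that all the solving happens in free modules; the two are equivalent, but the paper's packaging keeps the entries as honest ring elements throughout and is cleaner to execute. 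The paper also economizes where you do not: the duality $G_a\cong\Hom_A(H_a,A)$ and swap (Corollary 3.6) convert several of the asserted isomorphisms into each other, and the symmetry $x\leftrightarrow y$ (Observation 7.4) turns the ``(b)'' cases into the ``(a)'' cases, so only two $\Hom$-modules are computed from scratch (Lemmas 7.3 and 7.6) rather than ``all combinations.'' Your plan would work, but you would be redoing several computations that the paper gets for free.
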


The paper is organized as follows: In \secref{pair}, we introduce
exact pairs of zero divisors and discuss regularity. In \secref{two
  families}, the modules $G_a$ and $H_a$ are defined, and in
\secref[Sections~]{ind} and \secref[]{iso} we study the
indecomposability and the isomorphism classes of these modules.
\secref{main} simply contains our main result.  The final \secref{Hom}
is the tecnical part of this paper. In this section, on which several
results in \secref[Sections~]{ind}, \secref[]{iso}, and
\secref[]{main} are based, we demonstrate how to compute the module of
homomorphisms between various combinations of $G_a$ and $H_b$.

\section{Exact pairs of zero divisors and regular elements} \label{sec:pair}

In this section, we consider an exact pair of zero divisors as defined
by Henriques and {\c{S}}ega \cite{HS}. We introduce a notion of
regularity for such a pair and give examples.

\begin{dfn}
  \label{dfn:xy}
  Two non-units $x,y \in A$ are called an \emph{exact pair of zero
    divisors} if $\Ann_A(x)=(y)$ and $\Ann_A(y)=(x)$.
\end{dfn}

Let $M$ be a finitely generated $A$-module, and let \mbox{$a \in A$}
be an element.  Recall that $a$ is \emph{weakly regular on $M$} if
multiplication by $a$ on $M$ is a monomorphism. If, in addition, the
element $a$ is not a unit, then $a$ is called \emph{regular on $M$}.

A (weakly) regular element on the $A$-module \mbox{$M=A$} is simply
referred to as a (weakly) regular element in the ring $A$.

\begin{lem}
  \label{lem:regular}
  Let $x,y \in A$ be an exact pair of zero divisors. Then
  the following conditions are equivalent:
  \begin{eqc}
  \item $x$ is regular on $A/(y)$.
  \item $y$ is regular on $A/(x)$.
  \item $(x) \cap (y) = 0$.
  \end{eqc}
\end{lem}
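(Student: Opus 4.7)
The plan is to prove \eqclbl{i}$\iff$\eqclbl{iii} in detail, and then obtain \eqclbl{ii}$\iff$\eqclbl{iii} by the symmetry of the hypothesis in the roles of $x$ and $y$. The crucial preliminary observation, which I would state at the very beginning, is that $xy=0$: since $\Ann_A(x)=(y)$, the element $y$ itself annihilates $x$. Also, since $x,y$ are non-units by definition of an exact pair, ``regular'' and ``weakly regular'' coincide here, so the conditions \eqclbl{i} and \eqclbl{ii} reduce to injectivity of the multiplication maps $A/(y)\xrightarrow{x}A/(y)$ and $A/(x)\xrightarrow{y}A/(x)$.

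For \eqclbl{iii}$\Rightarrow$\eqclbl{i}: assume $(x)\cap(y)=0$ and suppose $xa\in(y)$ for some $a\in A$. Then $xa\in(x)\cap(y)=0$, so $a\in\Ann_A(x)=(y)$, showing that $x$ is (weakly) regular on $A/(y)$.

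For \eqclbl{i}$\Rightarrow$\eqclbl{iii}: let $z\in(x)\cap(y)$ and write $z=xa$ with $a\in A$. Since $z\in(y)$, condition \eqclbl{i} forces $a\in(y)$, say $a=yc$. Then $z=xa=xyc=0$ by the preliminary observation $xy=0$, so $(x)\cap(y)=0$.

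Finally, since the defining conditions $\Ann_A(x)=(y)$ and $\Ann_A(y)=(x)$ of an exact pair of zero divisors are symmetric in $x$ and $y$, and since condition \eqclbl{iii} is itself symmetric, the same argument with $x$ and $y$ interchanged yields \eqclbl{ii}$\iff$\eqclbl{iii}. There is no real obstacle here; the only subtle point worth spelling out is the identity $xy=0$, which is what makes the cycle of implications close without further hypotheses.
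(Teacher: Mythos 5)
Your proof is correct and follows essentially the same route as the paper's: prove \eqclbl{i}$\iff$\eqclbl{iii} directly and invoke the symmetry of the hypotheses for \eqclbl{ii}. You make the key identity $xy=0$ more explicit up front and note that regularity and weak regularity coincide for the non-units $x,y$, but the substance of the two implications matches the paper's argument.
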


\begin{proof}
  By symmetry, it suffices to prove $(i) \Leftrightarrow (iii)$. 

  First assume $(i)$, and let $z$ be in \mbox{$(x) \cap (y)$}. As
  \mbox{$z \in (x)$} we have \mbox{$z=ax$} for some $a$. As $z \in
  (y)$ it follows that \mbox{$x[a]_{(y)} = [z]_{(y)} = [0]_{(y)}$} in
  $A/(y)$, and since multiplication by $x$ on $A/(y)$ is a
  monomorphism, we conclude that $[a]_{(y)}=[0]_{(y)}$, that is, $a
  \in (y)$.  Consequently, $z=ax=0$ since, in particular, $yx=0$.

  Next assume $(iii)$. We must argue that multiplication by $x$ on
  $A/(y)$ is a mono\-mor\-phism. Thus, let $[a]_{(y)}$ in $A/(y)$
  satisfy \mbox{$x[a]_{(y)} = [0]_{(y)}$}, that is, $ax \in (y)$.
  Then \mbox{$ax \in (x) \cap (y)=0$}, and hence $a \in
  \Ann_A(x)=(y)$. Thus $[a]_{(y)}=[0]_{(y)}$ in $A/(y)$.
\end{proof}

\begin{dfn}
  \label{dfn:regular}
  An exact pair of zero divisors \mbox{$x,y \in A$} that
  satisfies the equivalent conditions in \lemref{regular} is called a
  \emph{regular} exact pair of zero divisors.
\end{dfn}

\begin{exa}
  \label{exa:fg}
  Let $Q$ be a commutative noetherian local ring, and let $f, g$ be
  regular elements in $Q$. Set \mbox{$A=Q/(fg)$} and let $x,y \in A$
  denote the cosets of $f,g$ with respect to the ideal $(fg)$.  Then
  $x,y$ is an exact pair of zero divisors in $A$; and this pair is
  regular if and only if $(f) \cap (g) = (fg)$ in $Q$.

  In particular, if $f$ and $g$ are non-zero and non-units in a
  noetherian local UFD $Q$, then $x,y$ is an exact pair of zero
  divisors in $A$; and this pair is regular if and only if $f$ and $g$
  are relatively prime. 

  Note that local UFDs come in all shapes and sizes (Gorenstein,
  non-Gorenstein etc.), in fact, by Heitmann \cite[Theorem 8]{RCH93}
  every complete local ring of depth \mbox{$>\!1$}, in which no
  integer is a zero divisor, is the completion of a local UFD.
\end{exa}

\begin{rmk}
  \label{rmk:depth}
  Let $\mathfrak{a}$ be a proper ideal in $A$, and let $M$ be a
  finitely generated module over $A/\mathfrak{a}$. Then
  \mbox{$\depth_A\!M = \depth_{A/\mathfrak{a}}\!M$} as a sequence
  $x_1,\ldots,x_n$ of non-units in $A$ is regular on $M$ if and only
  if the corresponding sequence
  $[x_1]_{\mathfrak{a}},\ldots,[x_n]_{\mathfrak{a}}$ in
  $A/\mathfrak{a}$ is regular on $M$.  In particular, the depth of the
  $A$-module $A/\mathfrak{a}$ equals the depth of the ring
  $A/\mathfrak{a}$.
\end{rmk}

Several results in this paper refer to a weakly regular element $a$ on
the $A$-module $A/(x,y)$. For example, $a$ could be a unit in $A$.
However, only in the case where $a$ is regular on the $A$-module
$A/(x,y)$ do we get interesting applications of our results. For the
existence of regular elements on $A/(x,y)$, we give the following.

\begin{lem}
  \label{lem:depth2}
  Let $Q$ be a commutative noetherian local ring, and let $f, g$ be
  regular elements in $Q$ such that \mbox{$(f) \cap (g) = (fg)$}.
  Consider the regular exact pair $x,y$ of zero divisors in
  \mbox{$A=Q/(fg)$} constructed in \exaref{fg}.

  If $\depth Q>2$ then there exists regular elements on the $A$-module
  $A/(x,y)$.
\end{lem}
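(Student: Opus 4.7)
The plan is to reduce everything to a depth computation on $Q$. Observe first that the ring $A/(x,y)$ is nothing other than $Q/(f,g)$, since $(x,y)$ is the image in $A=Q/(fg)$ of the ideal $(f,g)$, and $(fg)\subseteq(f,g)$. So I must produce a non-unit of $A$ that acts as a non-zerodivisor on $Q/(f,g)$.

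The first step is to upgrade the hypothesis $(f)\cap(g)=(fg)$ to the statement that $f,g$ is a $Q$-regular sequence. We are given that $f$ is regular in $Q$, so I only need $g$ to be regular on $Q/(f)$. If $gh\in(f)$ for some $h\in Q$, then $gh\in(f)\cap(g)=(fg)$, so $gh=fgk$ for some $k\in Q$; since $g$ is regular, cancellation gives $h=fk\in(f)$, as required. By symmetry $g,f$ is also a $Q$-regular sequence, though I don't need that.

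Next, because $f,g$ is a $Q$-regular sequence of length $2$, I have
\begin{equation*}
  \depth Q/(f,g) \;\geqslant\; \depth Q - 2 \;>\; 0.
\end{equation*}
Combining this with the identification $A/(x,y)\cong Q/(f,g)$ and invoking \rmkref{depth}, I conclude
\begin{equation*}
  \depth_A A/(x,y) \;=\; \depth_{Q/(f,g)} Q/(f,g) \;>\; 0.
\end{equation*}
Having positive depth on a non-zero finitely generated module over a noetherian local ring means, by definition, the existence of some non-unit $a\in A$ that is a non-zerodivisor on $A/(x,y)$; such an element is regular on $A/(x,y)$ in the sense of \dfnref{regular} (applied to modules, as recalled before \lemref{regular}). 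This $a$ is the element sought.

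There is no real obstacle: the only non-trivial point is the first step, promoting the intersection condition to regularity of $g$ modulo $f$, and even that is a one-line cancellation argument using that $g$ is $Q$-regular. All remaining pieces — rewriting $A/(x,y)$ as $Q/(f,g)$, the depth drop along a regular sequence, and \rmkref{depth} comparing depth over $A$ and over the quotient $A/(x,y)$ — are routine.
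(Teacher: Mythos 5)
Your proof is correct and follows essentially the same route as the paper: identify $A/(x,y)$ with $Q/(f,g)$, show that the hypothesis $(f)\cap(g)=(fg)$ promotes $f,g$ to a $Q$-regular sequence (the cancellation argument you give is exactly the one the paper leaves implicit), apply the depth drop along a regular sequence together with \rmkref{depth}, and conclude from $\depth_A A/(x,y)>0$.
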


\begin{proof}
  We must prove that the $A$-module $A/(x,y)$ has \mbox{$\depth>0$},
  equivalently, that the ring $A/(x,y)$ has \mbox{$\depth>0$},
  cf.~\rmkref{depth}. Since there is a ring isomorphism \mbox{$A/(x,y)
    \cong Q/(f,g)$}, we are required to show that $Q/(f,g)$ has
  \mbox{$\depth>0$}.

  To finish the proof it suffices to argue that $f,g$ is a $Q$-regular
  sequence, because then $\depth Q/(f,g) = \depth Q -2$ by Bruns and
  Herzog \cite[Proposition 1.2.10(d)]{BruHer}. However, it is given
  that $f$ is regular on $Q$, and that $g$ is regular on $Q/(f)$
  follows from the assumption \mbox{$(f) \cap (g) = (fg)$} and the
  regularity of $g$ on $Q$.
\end{proof}





\section{Two families of totally reflexive modules} \label{sec:two families}

Assuming that the ring $A$ admits an exact pair of zero divisors, we
introduce in this section two families $(G_a)_{a \in A}$ and $(H_a)_{a
  \in A}$ of totally reflexive $A$-modules.

We begin with the following definition due to Auslander
\cite[\S3.2.2]{MAs67}.  Although Auslander himself did not use the
terminology ``totally reflexive module'', this usage was introduced by
Avramov and Martsinkovsky \cite[\S2]{LLAAMr02} and has grown to be
standard in several later papers on the subject.

\begin{dfn}
  \label{dfn:TR}
  A finitely generated $A$-module $G$ is \emph{totally reflexive} if
  it satisfies the following three conditions:
  \begin{rqm}
  \item $\Ext^i_A(G,A)=0$ for all $i>0$.
  \item $\Ext^i_A(\Hom_A(G,A),A)=0$ for all $i>0$.
  \item The biduality map $G \longrightarrow \Hom_A(\Hom_A(G,A),A)$ is
    an isomorphism.
  \end{rqm}
\end{dfn}

\begin{dfn}
  \label{dfn:GH}
  Let \mbox{$x,y \in A$} be an exact pair of zero divisors,
  and let $a \in A$. We define two $2\!\times\!2$ matrices,
  \begin{displaymath}
    \gamma_a = 
      \begin{pmatrix}
        x & a \\
        0 & y
      \end{pmatrix}
    \quad \text{ and } \quad
    \eta_a = 
      \begin{pmatrix}
        y & -a \\
        0 & x
      \end{pmatrix}.
  \end{displaymath}
  Furthermore, considering $\gamma_a$ and $\eta_a$ as $A$-linear maps
  \mbox{$A^2 \to A^2$} acting on column vectors by multiplication from
  the left, we define two finitely generated $A$-modules,
  \begin{displaymath}
    G_a = \Coker \gamma_a
    \quad \text{ and } \quad
    H_a = \Coker \eta_a.
  \end{displaymath}
\end{dfn}

\begin{obs}
  \label{obs:unit}
  Let $x,y \in A$ be an exact pair of zero divisors, let
  $a$ be any element in $A$, and let $u$ be a unit in $A$. Then the
  commutative diagrams,
  \begin{displaymath}
    \begin{gathered}
    \xymatrix@R=6ex@C=8ex{
      A^2 \ar[r]^-{\gamma_{ua}}
    \ar[d]^-{\cong}_-{\text{\small
    $\left(\!\!
    \begin{array}{cc}
      1 & 0 \\
      0 & u
    \end{array}
    \!\!\right)$}} & A^2 
    \ar[d]_-{\cong}^-{\text{\small
    $\left(\!\!
    \begin{array}{cc}
      1 & 0 \\
      0 & u
    \end{array}
    \!\!\right)$}} \\
      A^2 \ar[r]_-{\gamma_a} & A^2
    }
    \end{gathered}
    \qquad \text{and} \qquad
    \begin{gathered}
    \xymatrix@R=6ex@C=8ex{
      A^2 \ar[r]^-{\eta_{ua}}
    \ar[d]^-{\cong}_-{\text{\small
    $\left(\!\!
    \begin{array}{cc}
      1 & 0 \\
      0 & u
    \end{array}
    \!\!\right)$}} & A^2 
    \ar[d]_-{\cong}^-{\text{\small
    $\left(\!\!
    \begin{array}{cc}
      1 & 0 \\
      0 & u
    \end{array}
    \!\!\right)$}} \\
      A^2 \ar[r]_-{\eta_a} & A^2
    }
    \end{gathered}
  \end{displaymath}
  show that $G_{ua} \cong G_a$ and $H_{ua} \cong H_a$. In particular,
  $G_{-a} \cong G_a$ and $H_{-a} \cong H_a$.
\end{obs}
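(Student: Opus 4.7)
\begin{prf*}[Proof plan]
The observation is essentially a routine verification once the candidate isomorphism $\operatorname{diag}(1,u)\colon A^2 \xrightarrow{\cong} A^2$ has been written down (as it already is in the statement). The plan is therefore to check commutativity of the two displayed squares and then invoke the functoriality of taking cokernels.

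First I would verify commutativity of the left-hand square. A direct matrix multiplication gives
\begin{displaymath}
  \begin{pmatrix} 1 & 0 \\ 0 & u \end{pmatrix}\!\gamma_{ua}
  \;=\; \begin{pmatrix} x & ua \\ 0 & uy \end{pmatrix}
  \;=\; \gamma_{a}\!\begin{pmatrix} 1 & 0 \\ 0 & u \end{pmatrix},
\end{displaymath}
so the square commutes. An identical calculation with $\eta$ in place of $\gamma$ and $-a$ in place of $a$ handles the right-hand square. Since $u$ is a unit in $A$, the vertical map $\operatorname{diag}(1,u)$ is an $A$-module automorphism of $A^2$ with inverse $\operatorname{diag}(1,u^{-1})$.

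Next, I would observe that a commutative square whose vertical arrows are isomorphisms identifies the two horizontal arrows as isomorphic morphisms in the arrow category of $A$-modules; passing to cokernels (which is functorial) therefore yields induced isomorphisms $G_{ua}\cong G_a$ and $H_{ua}\cong H_a$. Concretely, the isomorphism on the right factors through the quotient maps $A^2\to G_{ua}$ and $A^2\to G_a$ to give the desired isomorphism $G_{ua}\cong G_a$, and similarly for $H$.

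Finally, specializing to $u=-1\in A$ (which is a unit in any ring), the two isomorphisms give $G_{-a}\cong G_a$ and $H_{-a}\cong H_a$, as asserted. I do not anticipate any real obstacle: the entire argument is a two-line matrix computation plus the standard fact that isomorphisms of morphisms descend to isomorphisms of cokernels.
\end{prf*}
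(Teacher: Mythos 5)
Your proposal is correct and matches the paper's (implicit) argument: the observation carries no separate proof in the paper because the displayed diagrams are the proof, and your explicit matrix check plus the cokernel-functoriality remark is exactly the routine verification being asserted. The only thing worth noting is that your computation for $\eta$ is phrased slightly loosely (``$-a$ in place of $a$''), but the actual check $\operatorname{diag}(1,u)\,\eta_{ua}=\eta_a\,\operatorname{diag}(1,u)$ goes through just as for $\gamma$, so there is no gap.
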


\begin{lem}
  \label{lem:gamma}
  Let $x,y \in A$ be an exact pair of zero divisors, and
  let $a \in A$.  Then there is an exact complex of free $A$-modules
  given by
  \begin{displaymath}
    \boldsymbol{F} = \ 
    \cdots \longrightarrow A^2 
    \stackrel{\gamma_a}{\longrightarrow} A^2 
    \stackrel{\eta_a}{\longrightarrow} A^2 
    \stackrel{\gamma_a}{\longrightarrow} A^2 
    \stackrel{\eta_a}{\longrightarrow} \cdots.
  \end{displaymath}
  Furthermore, there is an isomorphism of complexes
  $\Hom_A(\boldsymbol{F},A) \stackrel{\cong}{\longrightarrow}
  \boldsymbol{F}$,
  \begin{displaymath}
    \xymatrix{
      \cdots \ar[r]^-{\eta_a^t} &
      A^2 \ar[r]^-{\gamma_a^t} 
      \ar[d]^-{\varphi}_-{\cong} & A^2 \ar[r]^-{\eta_a^t} 
      \ar[d]^-{\varphi}_-{\cong} & A^2
      \ar[d]^-{\varphi}_-{\cong} \ar[r]^-{\gamma_a^t} & \cdots
      \\
      \cdots \ar[r]^-{\gamma_a} & A^2
      \ar[r]^-{\eta_a} & A^2 \ar[r]^-{\gamma_a} & A^2 \ar[r]^-{\eta_a}
      & \cdots}
  \end{displaymath}
  where $(-)^t$ is transposition of matrices, and $\varphi$ is the
  isomorphism given by
  \begin{displaymath}
    \varphi = 
      \begin{pmatrix}
        0 & 1 \\
        -1 & 0
      \end{pmatrix}.
  \end{displaymath}
  In particular, the complex $\Hom_A(\boldsymbol{F},A)$ is exact.
\end{lem}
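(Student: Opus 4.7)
The plan is to verify the two claims by direct matrix computation, drawing on the defining relations $xy=yx=0$, $\Ann_A(x)=(y)$, and $\Ann_A(y)=(x)$ of an exact pair of zero divisors.

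First I would verify that $\boldsymbol{F}$ is a complex: compute
\begin{displaymath}
  \eta_a\gamma_a \;=\;
  \begin{pmatrix} y & -a \\ 0 & x \end{pmatrix}
  \begin{pmatrix} x & a \\ 0 & y \end{pmatrix}
  \;=\;
  \begin{pmatrix} yx & ya-ay \\ 0 & xy \end{pmatrix},
\end{displaymath}
which vanishes because $xy=yx=0$; the computation $\gamma_a\eta_a=0$ is analogous. Exactness is the only real content, and I would argue it directly. Given a column $(u,v)^t\in\ker\gamma_a$, the relations read $xu+av=0$ and $yv=0$. From $yv=0$ one has $v\in\Ann_A(y)=(x)$, so $v=xw$ for some $w\in A$; substituting gives $x(u+aw)=0$, so $u+aw\in\Ann_A(x)=(y)$, giving $u=-aw+yz$ for some $z\in A$. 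Then $(u,v)^t=\eta_a(z,w)^t$, which proves $\ker\gamma_a\subseteq\Im\eta_a$. The containment $\ker\eta_a\subseteq\Im\gamma_a$ is the same argument with the roles of $x$ and $y$ swapped (and a sign adjusted): from $xv=0$ one gets $v=yw$, and then $y(u-aw)=0$ forces $u=aw+xz$, so $(u,v)^t=\gamma_a(z,w)^t$.

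For the isomorphism of complexes, I would just check the two squares commute by multiplying matrices:
\begin{displaymath}
  \varphi\gamma_a^t
  =\begin{pmatrix} 0 & 1 \\ -1 & 0 \end{pmatrix}
   \begin{pmatrix} x & 0 \\ a & y \end{pmatrix}
  =\begin{pmatrix} a & y \\ -x & 0 \end{pmatrix}
  =\begin{pmatrix} y & -a \\ 0 & x \end{pmatrix}
   \begin{pmatrix} 0 & 1 \\ -1 & 0 \end{pmatrix}
  =\eta_a\varphi,
\end{displaymath}
and similarly $\varphi\eta_a^t=\gamma_a\varphi$. Since $\det\varphi=1$, the map $\varphi\colon A^2\to A^2$ is an isomorphism of free $A$-modules, and applying it at every position yields the claimed isomorphism of complexes.

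The final assertion that $\Hom_A(\boldsymbol{F},A)$ is exact is now immediate: it is isomorphic as a complex to $\boldsymbol{F}$, which we have just shown to be exact. I do not expect any real obstacle; the only thing to be careful about is getting the sign conventions for the dual differentials and the matrix $\varphi$ consistent, which is precisely why the off-diagonal entry of $\eta_a$ was chosen to be $-a$ rather than $a$.
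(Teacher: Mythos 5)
Your proof is correct and follows the same route as the paper: verify the complex relation from $xy=yx=0$, then chase a kernel element using $\Ann_A(y)=(x)$ and $\Ann_A(x)=(y)$ to land in the image, and finish by checking that $\varphi$ intertwines the transposed differentials with the originals. You are in fact slightly more explicit than the paper, which leaves the second inclusion and the commutativity of the squares to the reader; your computations $\varphi\gamma_a^t=\eta_a\varphi$ and $\varphi\eta_a^t=\gamma_a\varphi$ are exactly the "straightforward" check alluded to there.
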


\begin{proof}
  As \mbox{$\gamma_a\eta_a=\eta_a\gamma_a=0$} we conclude that
  $\boldsymbol{F}$ is a complex. To show that $\boldsymbol{F}$ is
  exact, we must argue that \mbox{$\Ker \gamma_a \subseteq \Im
    \eta_a$} and \mbox{$\Ker \eta_a \subseteq \Im \gamma_a$}.  We only
  prove the first inclusion, as the other one is proved analogously.
  To this end, assume that
  \begin{displaymath}
    \gamma_a
    \begin{pmatrix}
      b_1 \\ b_2
    \end{pmatrix}
    =
    \begin{pmatrix}
      b_1x+b_2a \\ b_2y
    \end{pmatrix}
    =
    \begin{pmatrix}
      0 \\ 0
    \end{pmatrix}.
  \end{displaymath}
  As $b_2y=0$ we get that $b_2=c_2x$ for some $c_2$ in $A$. Hence
  $(b_1+c_2a)x = b_1x+b_2a = 0$, so there exists $c_1$ in $A$ with
  $b_1+c_2a=c_1y$. Consequently,
  \begin{displaymath}
    \eta_a
    \begin{pmatrix}
      c_1 \\ c_2
    \end{pmatrix}
    =
    \begin{pmatrix}
      c_1y-c_2a \\ c_2x
    \end{pmatrix}
    =
    \begin{pmatrix}
      b_1 \\ b_2
    \end{pmatrix},
  \end{displaymath}
  as desired.  The last assertion of the lemma is straightforward to
  check.
\end{proof}

\begin{prp}
  \label{prp:gamma}
  Let $x,y \in A$ be an exact pair of zero divisors, and
  let $a \in A$.  Then the $A$-modules $G_a$ and $H_a$ are totally
  reflexive. Furthermore, $G_a$ and $H_a$ are each others dual, that
  is, $G_a \cong \Hom_A(H_a,A)$ and $H_a \cong \Hom_A(G_a,A)$.
\end{prp}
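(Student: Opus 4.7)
My plan is to leverage the complete resolution $\boldsymbol{F}$ provided by \lemref{gamma}. Truncating $\boldsymbol{F}$ just to the left of a $\gamma_a$-step yields the free resolution
\[
\cdots \longrightarrow A^2 \xrightarrow{\eta_a} A^2 \xrightarrow{\gamma_a} A^2 \longrightarrow G_a \longrightarrow 0,
\]
and the analogous truncation at an $\eta_a$-step resolves $H_a$. Since $\boldsymbol{F}$ is self-dual via $\varphi$, all three conditions of \dfnref{TR} should fall out.

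For conditions (1) and (2) of \dfnref{TR}, I apply $\Hom_A(-,A)$ to the resolution of $G_a$; the result is the positive-degree part of $\Hom_A(\boldsymbol{F},A)$, which is exact by \lemref{gamma}. Hence $\Ext^i_A(G_a,A)=0$ for $i>0$, and the symmetric argument gives $\Ext^i_A(H_a,A)=0$.

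Next the duality (which also verifies condition (2) concretely). From the two-step tail $A^2\xrightarrow{\eta_a}A^2\xrightarrow{\gamma_a}A^2\to G_a\to 0$ one reads off $\Hom_A(G_a,A)=\Ker\gamma_a^t$. The isomorphism $\varphi$ of \lemref{gamma} identifies $\Ker\gamma_a^t$ with $\Ker\eta_a$; by exactness of $\boldsymbol{F}$, $\Ker\eta_a=\Im\gamma_a$; and the first isomorphism theorem gives $\Im\gamma_a\cong A^2/\Ker\gamma_a = A^2/\Im\eta_a = H_a$. Symmetrically $\Hom_A(H_a,A)\cong G_a$.

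Finally, condition (3), the biduality, I handle via the Auslander transpose. The presentation $A^2\xrightarrow{\gamma_a}A^2\to G_a\to 0$ yields transpose $\Coker\gamma_a^t$, which via $\varphi$ is $\Coker\eta_a = H_a$. The classical Auslander--Bridger exact sequence
\[
0 \longrightarrow \Ext^1_A(H_a,A) \longrightarrow G_a \longrightarrow \Hom_A(\Hom_A(G_a,A),A) \longrightarrow \Ext^2_A(H_a,A) \longrightarrow 0
\]
then has both outer terms zero by the $\Ext$-computation above, so the canonical biduality map is an isomorphism; symmetrically for $H_a$. The main subtlety lies in this last step, for which the transpose viewpoint is essential; all the other pieces are direct consequences of the self-duality of $\boldsymbol{F}$ expressed by $\varphi$.
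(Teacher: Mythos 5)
Your argument is correct and follows the same overall strategy as the paper: use the self-dual complete resolution $\boldsymbol{F}$ from \lemref{gamma}. The difference is one of granularity. The paper dispatches total reflexivity in one line by citing Auslander's criterion \cite[prop.~8 in \S3.2.2]{MAs67}, which says precisely that cokernels of differentials in an acyclic complex of finitely generated free modules with acyclic $\Hom_A(-,A)$-dual are totally reflexive, and then notes the duality is immediate from $\boldsymbol{F} \cong \Hom_A(\boldsymbol{F},A)$. You instead unpack that citation and verify conditions (1)--(3) of \dfnref{TR} by hand: the $\Ext$-vanishing from the exactness of $\Hom_A(\boldsymbol{F},A)$, the duality by chasing $\Ker\gamma_a^t$ through $\varphi$ and the first isomorphism theorem, and the biduality via the Auslander--Bridger sequence with the transpose identified (again via $\varphi$) as $H_a$. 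All three steps check out, including the identification of $\Coker\gamma_a^t$ with $\Coker\eta_a = H_a$ and the vanishing of the outer $\Ext$-terms. The payoff of your version is self-containedness; the payoff of the paper's version is brevity, since Auslander's proposition packages exactly the three verifications you carry out.
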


\begin{proof}
  By \lemref{gamma}, the exact complex $\boldsymbol{F}$ consists of
  finitely generated free $A$-modules, and furthermore
  $\Hom_A(\boldsymbol{F},A)$ is exact. Since $G_a$ and $H_a$ are
  cokernels of differentials in $\boldsymbol{F}$, it follows that they
  are totally reflexive, cf.~\cite[prop.~8 in \S3.2.2]{MAs67}.

  The duality is immediate from the isomorphism
  $\boldsymbol{F} \cong \Hom_A(\boldsymbol{F},A)$.
\end{proof}

The proof of the following result uses the derived category $\D(A)$,
and the right derived $\Hom$-functor $\RHom_A(-,-)$. We refer to
Weibel \cite[Chapter 10]{Wei} for details.

\begin{cor}
  \label{cor:Ext}
  Let $x,y \in A$ be an exact pair of zero divisors, and
  let $a,b \in A$.  Then there are the following isomorphisms of
  $A$-modules:
  \begin{prt}
  \item $\Ext_A^i(H_b,G_a) \cong \Ext_A^i(H_a,G_b)$ for
    every $i \in \mathbb{Z}$.
  \item $\Ext_A^i(G_a,H_b) \cong \Ext_A^i(G_b,H_a)$ for
    every $i \in \mathbb{Z}$.
  \item $\Ext_A^i(G_a,G_b) \cong \Ext_A^i(H_b,H_a)$ for
    every $i \in \mathbb{Z}$.
  \end{prt}
\end{cor}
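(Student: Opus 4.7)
The plan is to deduce all three isomorphisms from a single symmetry in the derived category, combined with the self-duality of $G_a$ and $H_a$ established in \prpref{gamma}. The key input is the standard derived tensor-hom adjunction (cf.~Weibel \cite[Chapter 10]{Wei}): for any $A$-modules $M$ and $N$ there is a quasi-isomorphism $\RHom_A(M,\RHom_A(N,A)) \simeq \RHom_A(M \otimes_A^{\mathbf{L}} N,\, A)$ in $\D(A)$. Since $\otimes_A^{\mathbf{L}}$ is symmetric in $M$ and $N$, this yields the master identity
\begin{displaymath}
  \RHom_A(M,\RHom_A(N,A)) \,\simeq\, \RHom_A(N,\RHom_A(M,A)) \qquad(\ast)
\end{displaymath}
in $\D(A)$, valid for arbitrary $A$-modules $M$ and $N$.

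To feed $(\ast)$ into our situation, I next observe that total reflexivity (\prpref{gamma}) gives $\Ext^i_A(G_c,A) = \Ext^i_A(H_c,A) = 0$ for all $i>0$ and every $c \in A$. Together with the duality $G_c \cong \Hom_A(H_c,A)$ and $H_c \cong \Hom_A(G_c,A)$ from the same proposition, this furnishes the identifications $\RHom_A(G_c,A) \simeq H_c$ and $\RHom_A(H_c,A) \simeq G_c$ in $\D(A)$. The three parts of the corollary then fall out of $(\ast)$ by suitable choices: for \prtlbl{a} take $M = H_b$ and $N = H_a$; for \prtlbl{b} take $M = G_a$ and $N = G_b$; for \prtlbl{c} take $M = G_a$ and $N = H_b$. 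In each case both sides of $(\ast)$ rewrite directly as the pair of $\RHom$'s appearing in the statement, and passing to $i$-th cohomology gives the claimed $\Ext^i$ isomorphism of $A$-modules. For $i<0$ both sides vanish trivially since we are dealing with honest modules, so the quantifier ``every $i \in \mathbb{Z}$'' is automatic.

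I foresee no serious obstacle: every step is a standard manipulation in $\D(A)$, with total reflexivity supplying exactly the vanishing needed to replace each $\RHom_A(-,A)$ by the ordinary $A$-dual via \prpref{gamma}. The only thing requiring attention is bookkeeping, to verify that each of the three substitutions in $(\ast)$ really pairs up $G$'s and $H$'s in the correct way to match \prtlbl{a}, \prtlbl{b}, and \prtlbl{c}.
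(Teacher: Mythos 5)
Your proof is correct and takes essentially the same approach as the paper: both identify $\RHom_A(G_c,A)\simeq H_c$ and $\RHom_A(H_c,A)\simeq G_c$ in $\D(A)$ using total reflexivity and \prpref{gamma}, then apply the swap isomorphism $\RHom_A(M,\RHom_A(N,A))\simeq\RHom_A(N,\RHom_A(M,A))$ and pass to cohomology. The only cosmetic difference is that you explicitly derive the swap isomorphism from derived tensor-hom adjunction and commutativity of $\otimes^{\mathbf{L}}_A$, whereas the paper simply cites it, and the paper proves only part \prtlbl{a} while you spell out the substitutions for all three parts.
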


\begin{proof}
  We only prove part (a), as the proofs of (b) and (c) are similar.

  As $H_a$ is totally reflexive, \mbox{$\Ext_A^i(H_a,A)=0$} for all
  $i>0$. By \prpref{gamma} we also have $G_a \cong \Hom_A(H_a,A)$, and
  therefore $G_a \cong \RHom_A(H_a,A)$ in the derived category
  $\D(A)$. Consequently, one has the following isomorphisms in $\D(A)$,
  \begin{align*}
    \RHom_A(H_b,G_a) &\cong \RHom_A(H_b,\RHom_A(H_a,A)) \\
    &\cong \RHom_A(H_a,\RHom_A(H_b,A)) \\
    &\cong \RHom_A(H_a,G_b),
  \end{align*}
  where the second isomorphism is the so-called swap-isomorphism. The
  desired result is obtained by taking homology on the displayed
  isomorphisms.
\end{proof}

\begin{rmk}
  \corref{Ext}(c) implies that $\Hom_A(G_a,G_a)$ and $\Hom_A(H_a,H_a)$
  are isomorphic as $A$-modules. One should be careful to conclude
  from this fact alone that $\End_A(G_a)$ and $\End_A(H_a)$ are
  isomorphic rings, cf.~\prpref{End-iso}.
\end{rmk}


\begin{prp}
  \label{prp:exact}
  Let $x,y \in A$ be an exact pair of zero divisors, and
  let $a \in A$. 
  \begin{prt}
  \item If $a$ is weakly  regular on $A/(y)$  then $G_a$ is isomorphic
    to the ideal $(y,a)$.
  \item If $a$ is weakly  regular on $A/(x)$  then $H_a$ is isomorphic
    to the ideal $(x,a)$.
  \end{prt}
\end{prp}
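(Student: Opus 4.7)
The strategy is to exhibit an explicit surjection $A^2 \twoheadrightarrow (y,a)$ (resp.\ $A^2 \twoheadrightarrow (x,a)$) that vanishes on the image of $\gamma_a$ (resp.\ $\eta_a$), and then use the weak regularity hypothesis to identify the kernel with exactly that image.

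For part (a), I would define $\pi \colon A^2 \to (y,a)$ by $(b_1,b_2)^t \mapsto b_1 y - b_2 a$. This is clearly surjective, and it annihilates the columns of $\gamma_a$: on $(x,0)^t$ the value is $xy=0$ (since $x,y$ is an exact pair of zero divisors), and on $(a,y)^t$ the value is $ay - ya = 0$. Hence $\pi$ factors through a surjection $\bar\pi \colon G_a \twoheadrightarrow (y,a)$. The essential step is injectivity of $\bar\pi$. Suppose $b_1 y = b_2 a$. Then $b_2 a \in (y)$, so weak regularity of $a$ on $A/(y)$ forces $b_2 \in (y)$, say $b_2 = c_2 y$. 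Substituting, $(b_1 - c_2 a)y = 0$, so $b_1 - c_2 a \in \Ann_A(y) = (x)$, i.e., $b_1 = c_1 x + c_2 a$ for some $c_1$. Then $(b_1,b_2)^t = \gamma_a (c_1,c_2)^t$ lies in $\Im\gamma_a$, which shows $\bar\pi$ is an isomorphism.

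For part (b), the argument is entirely symmetric after interchanging the roles of $x$ and $y$: define $\pi' \colon A^2 \to (x,a)$ by $(b_1,b_2)^t \mapsto b_1 x + b_2 a$. One checks it kills the columns $(y,0)^t$ and $(-a,x)^t$ of $\eta_a$, then uses weak regularity of $a$ on $A/(x)$ and the identity $\Ann_A(x) = (y)$ to verify that any element of $\Ker\pi'$ lifts back into $\Im\eta_a$.

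The only genuine subtlety is making sure the hypothesis is used in exactly the right place — passing from $b_2 a \in (y)$ to $b_2 \in (y)$ — and that the antisymmetric formula $b_1 y - b_2 a$ (with the minus sign) matches the sign convention in $\gamma_a$, which has $a$ in the upper-right corner with a positive sign. Everything else is a direct computation using the defining relations $xy=0=yx$ and $\Ann_A(x)=(y)$, $\Ann_A(y)=(x)$ coming from Definition~\ref{dfn:xy}.
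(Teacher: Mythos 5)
Your proof is correct and takes essentially the same approach as the paper: you build the same surjection $A^2 \to (y,a)$ given by $(b_1,b_2)^t \mapsto b_1y - b_2a$, check the same two vanishings on the columns of $\gamma_a$, and identify the kernel with $\Im\gamma_a$ by the same chain of deductions (weak regularity of $a$ on $A/(y)$ yielding $b_2 \in (y)$, then $\Ann_A(y) = (x)$ yielding $b_1 - c_2a \in (x)$). The paper packages this as exactness of the sequence $A^2 \xrightarrow{\gamma_a} A^2 \xrightarrow{(y\ -a)} (y,a) \to 0$, but the content is identical.
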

 
\begin{proof}
  We only prove part \prtlbl{a} as the proof of \prtlbl{b} is similar.
  Since $G_a = \Coker \gamma_a$, the assertion will follow once we
  have established exactness of the sequence,
  \begin{displaymath}
    \xymatrix{
      A^2 \ar[r]^-{\gamma_a} & A^2 \ar[r]^-{(y \ -a)} & (y,a) \ar[r] & 0.
    } 
  \end{displaymath}  
  Clearly, the image of \mbox{$(y \ -\!a)$} is all of $(y,a)$, and its
  composite with $\gamma_a$ is zero:
  \begin{displaymath}
    (y \ -\!a)\gamma_a = 
    (y \ -\!a)
      \begin{pmatrix}
        x & a \\
        0 & y
      \end{pmatrix}
     = (0 \ \ 0).
  \end{displaymath}
  It remains to see that $\Ker(y \ -\!a) \subseteq \Im \gamma_a$.
  Thus, assume that $(b_1,b_2) \in A^2$ satisfies
  \begin{displaymath}
    b_1y-b_2a = (y \ -\!a)
      \begin{pmatrix}
        b_1 \\
        b_2
      \end{pmatrix}
     = 0.
  \end{displaymath}
  As \mbox{$b_2a = b_1y \in (y)$} and since $a$ is weakly regular on
  $A/(y)$, it follows that \mbox{$b_2 \in (y)$},
  i.e.~\mbox{$b_2=c_2y$} for some $c_2$. Now
  \mbox{$(b_1-c_2a)y=b_1y-b_2a=0$}, and as \mbox{$\Ann_A(y)=(x)$} we
  conclude that $b_1-c_2a=c_1x$ for some $c_1$. Consequently,
  \begin{displaymath}
      \gamma_a
      \begin{pmatrix}
        c_1 \\
        c_2
      \end{pmatrix}
     = 
      \begin{pmatrix}
        x & a \\
        0 & y
      \end{pmatrix}
      \begin{pmatrix}
        c_1 \\
        c_2
      \end{pmatrix}
     = 
      \begin{pmatrix}
        c_1x+c_2a \\
        c_2y
      \end{pmatrix}
     =
      \begin{pmatrix}
        b_1 \\
        b_2
      \end{pmatrix},
  \end{displaymath} 
  that is, $(b_1,b_2) \in \Im \gamma_a$, as desired. 
\end{proof}

\begin{cor}
  \label{cor:free--non-free}
  Let $x,y \in A$ be an exact pair of zero divisors, and
  let $a \in A$. 
  \begin{prt}
  \item If $a$ is a unit then $G_a \cong H_a \cong A$.
  \item If $a$ is not a unit then neither $G_a$ nor $H_a$ is free.
  \end{prt}
\end{cor}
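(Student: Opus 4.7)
The plan is to deduce part~(a) immediately from \prpref{exact} and to deduce part~(b) by observing that the exact complex $\boldsymbol{F}$ constructed in \lemref{gamma} is in fact a \emph{minimal} free resolution precisely when $a$ is a non-unit.

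For part~(a), note that $x$ and $y$ are non-units by \dfnref{xy}, so the quotients $A/(y)$ and $A/(x)$ are non-zero. Since multiplication by a unit is an isomorphism on any $A$-module, the assumed unit $a$ is weakly regular on both $A/(y)$ and $A/(x)$. Thus \prpref{exact} applies and gives $G_a \cong (y,a)$ and $H_a \cong (x,a)$, each of which equals $A$ because $a$ is a unit.

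For part~(b), with $a$ a non-unit, every entry of $\gamma_a$ and $\eta_a$ lies in the maximal ideal $\mathfrak{m}$ of $A$. Consequently the image of each differential in $\boldsymbol{F}$ is contained in $\mathfrak{m}A^2$, and the truncations
\begin{displaymath}
  \cdots \longrightarrow A^2 \stackrel{\eta_a}{\longrightarrow} A^2 \stackrel{\gamma_a}{\longrightarrow} A^2 \longrightarrow G_a \longrightarrow 0
  \quad\text{and}\quad
  \cdots \longrightarrow A^2 \stackrel{\gamma_a}{\longrightarrow} A^2 \stackrel{\eta_a}{\longrightarrow} A^2 \longrightarrow H_a \longrightarrow 0
\end{displaymath}
are minimal free resolutions of infinite length. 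Hence $\pd_A G_a = \pd_A H_a = \infty$, so neither module is projective, and since every finitely generated projective module over the local ring $A$ is free, neither is free. The only point worth flagging is the minimality step: one needs to invoke $x,y\in\mathfrak{m}$ from \dfnref{xy} together with the hypothesis $a\in\mathfrak{m}$ in order to guarantee that all matrix entries, and hence all differential images, sit inside $\mathfrak{m}A^2$. This is precisely where the non-unit hypothesis on $a$ intervenes, and beyond this the argument presents no genuine obstacle.
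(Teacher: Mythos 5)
Your proof is correct and follows essentially the same route as the paper: part (a) via \prpref{exact} and part (b) by observing that the non-unit hypothesis forces $\gamma_a$ and $\eta_a$ to have entries in $\mathfrak{m}$, so that $\boldsymbol{F}$ yields minimal free resolutions of infinite length. Your elaboration of why a unit $a$ is weakly regular on $A/(x)$ and $A/(y)$ fills in a detail the paper leaves implicit, but the argument is the same.
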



\begin{proof}
  \proofoftag{a} Immediate from \prpref{exact}.

  \proofoftag{b} If $a$ is not a unit then all the entries in
  $\gamma_a$ and $\eta_a$ belong to $A$.  Consequently, the
  homomorphisms $k \otimes_A \gamma_a$ and $k \otimes_A \eta_a$ are
  zero, and it follows that
  \begin{displaymath}
    \cdots \longrightarrow A^2 
    \stackrel{\gamma_a}{\longrightarrow} A^2 
    \stackrel{\eta_a}{\longrightarrow} A^2 
    \stackrel{\gamma_a}{\longrightarrow} A^2 
    \longrightarrow G_a \longrightarrow 0
  \end{displaymath}
  is an augmented minimal free resolution of $G_a$. Hence $G_a$ has
  infinite projective dimension. Analogously one sees that $H_a$ has
  infinite projective dimension.
\end{proof}

\section{Indecomposability} \label{sec:ind}

For a general ring element $a$, the $A$-modules $G_a$ and $H_a$ are
\emph{not} indecomposable, cf.~\exaref{decomposable}. However, under
suitable assumptions we will prove that $\End_A(G_a)$ and
$\End_A(H_a)$ are both isomorphic to $A$, in particular, the
endomorphism rings are (commutative, noetherian and) local. Hence
$G_a$ and $H_a$ are indecomposable in a quite strong sense.




We begin by establishing a relation between the endomorphism rings of
$G_a$ and $H_a$ which is valid for any ring element $a$.

\begin{obs}
  Let $F$ be an $A$-linear contravariant endofunctor on the category
  of $A$-modules, and let $M$ be an $A$-module. Then there is an
  induced homomorphism of $A$-algebras, $\End_A(M)^{\mathrm{op}} \to
  \End_A(FM)$ given by $\alpha \mapsto F\alpha$.
\end{obs}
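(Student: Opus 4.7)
The plan is to verify the three axioms of an $A$-algebra homomorphism directly from the defining properties of an $A$-linear contravariant functor; nothing deeper is needed, and I expect no real obstacle.

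First I would denote the claimed map by $\Phi \colon \End_A(M)^{\mathrm{op}} \to \End_A(FM)$, $\alpha \mapsto F\alpha$. Note that for any endomorphism $\alpha$ of $M$, contravariance of $F$ means $F\alpha$ is an endomorphism of $FM$, so $\Phi$ is at least well-defined as a map of sets. The $A$-linearity of $F$ on Hom-sets immediately gives $F(\alpha+\beta)=F\alpha+F\beta$ and $F(a\alpha)=aF\alpha$ for $a \in A$, which covers additivity and $A$-linearity of $\Phi$.

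Next I would check the unital condition: functoriality of $F$ yields $F(\mathrm{id}_M)=\mathrm{id}_{FM}$, so $\Phi$ sends the unit of $\End_A(M)^{\mathrm{op}}$ to the unit of $\End_A(FM)$.

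The only subtle point is multiplicativity, and here the ``$\mathrm{op}$'' is precisely what makes things line up. Write $*$ for composition in $\End_A(M)^{\mathrm{op}}$, so $\alpha*\beta = \beta\circ\alpha$. Contravariant functoriality of $F$ gives $F(\beta\circ\alpha) = F\alpha\circ F\beta$, hence
\begin{displaymath}
  \Phi(\alpha*\beta) = F(\beta\circ\alpha) = F\alpha\circ F\beta = \Phi(\alpha)\circ\Phi(\beta),
\end{displaymath}
so $\Phi$ is a ring homomorphism from $\End_A(M)^{\mathrm{op}}$ to $\End_A(FM)$. Combined with $A$-linearity and unitality from the previous step, this establishes that $\Phi$ is a homomorphism of $A$-algebras, completing the verification.
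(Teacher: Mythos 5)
Your verification is correct, and it is the obvious direct check that the paper leaves implicit (the Observation is stated without proof). You correctly identify the one small wrinkle — that contravariance of $F$ reverses composition, which is exactly compensated by taking the opposite ring on the source — and the rest (additivity, $A$-linearity, unitality) follows immediately from $F$ being an $A$-linear functor, just as you say.
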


\begin{prp}
  \label{prp:End-iso}
  Let $M$ be a reflexive $A$-module. Then the induced homomorphism
  \mbox{$\varepsilon_M \colon \End_A(M)^{\mathrm{op}} \to
    \End_A(\Hom_A(M,A))$} of $A$-algebras is an isomorphism.  In
  particular, $M$ is indecomposable if and only if $\,\Hom_A(M,A)$ is
  indecomposable.
\end{prp}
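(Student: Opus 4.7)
\begin{prf}[Proof sketch]
Write $M^* = \Hom_A(M,A)$ and let $\delta_M \colon M \to M^{**}$ denote the biduality map, which by hypothesis is an isomorphism. The map $\varepsilon_M$ in question is $\alpha \mapsto \alpha^* := \Hom_A(\alpha,A)$. My plan is to exhibit an explicit two-sided inverse, namely
\begin{displaymath}
  \psi \colon \End_A(M^*) \longrightarrow \End_A(M)^{\mathrm{op}},
  \qquad \psi(\beta) = \delta_M^{-1} \circ \beta^* \circ \delta_M.
\end{displaymath}
Note that $\psi$ is well-defined as a map of $A$-modules, and it reverses composition (because the composition $\beta^*$ does), so it lands in the opposite algebra, as required.

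That $\psi \circ \varepsilon_M$ is the identity is a direct application of the naturality of $\delta_{(-)}$: for any $\alpha \in \End_A(M)$ one has $\alpha^{**}\circ \delta_M = \delta_M \circ \alpha$, so $\delta_M^{-1}\circ \alpha^{**}\circ \delta_M = \alpha$. For the other composition, $\varepsilon_M \circ \psi$, applying $(-)^*$ to the definition of $\psi(\beta)$ and using that dualization reverses composition yields $\varepsilon_M(\psi(\beta)) = \delta_M^* \circ \beta^{**} \circ (\delta_M^*)^{-1}$. The key input now is the standard triangle identity $\delta_M^* \circ \delta_{M^*} = \operatorname{id}_{M^*}$, valid for any module, which (since $\delta_M$ is invertible) gives $\delta_{M^*} = (\delta_M^*)^{-1}$. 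Combined with the naturality of $\delta$ applied to the endomorphism $\beta$ of $M^*$, i.e.~$\beta^{**} \circ \delta_{M^*} = \delta_{M^*} \circ \beta$, one concludes $\delta_M^* \circ \beta^{**} \circ (\delta_M^*)^{-1} = \beta$. Therefore $\varepsilon_M$ is bijective, and since it is manifestly an $A$-algebra homomorphism, it is an $A$-algebra isomorphism.

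For the final statement, recall that a finitely generated module over a noetherian local ring is indecomposable if and only if its endomorphism ring has only the trivial idempotents $0$ and $1$. An isomorphism of rings preserves idempotents, and a ring $R$ and its opposite $R^{\mathrm{op}}$ have literally the same underlying set of idempotents. Hence the existence of the isomorphism $\varepsilon_M$ shows that $\End_A(M)$ has a nontrivial idempotent if and only if $\End_A(M^*)$ does, giving the desired equivalence.

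The only mildly delicate point is the identity $\delta_M^* \circ \delta_{M^*} = \operatorname{id}_{M^*}$; this is a formal consequence of the definitions (evaluate both sides on $\varphi \in M^*$ at $m \in M$) and needs no extra hypothesis on $M$.
\end{prf}
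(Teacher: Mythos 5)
Your proof is correct, but it takes a genuinely different route from the paper. The paper factors $\varepsilon_M$ as a composite $\Sigma \circ (\delta_M \circ -)$, where $\Sigma \colon \Hom(M,\Hom(\Hom(M,A),A)) \to \Hom(\Hom(M,A),\Hom(M,A))$ is the (always invertible) swap isomorphism and $\delta_M \circ -$ is invertible precisely because $M$ is reflexive; the isomorphism claim is then immediate with no further computation. You instead exhibit an explicit two-sided inverse $\psi(\beta) = \delta_M^{-1}\circ\beta^*\circ\delta_M$ and verify both compositions using the naturality of the biduality map together with the triangle identity $\delta_M^*\circ\delta_{M^*}=\operatorname{id}_{M^*}$. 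Both arguments ultimately turn on reflexivity; yours is more computational (it needs the triangle identity and the invertibility of $\delta_M^*$, whereas the paper's does not), but it has the virtue of displaying the inverse explicitly, which can be useful if one later wants to trace where a particular endomorphism of $\Hom_A(M,A)$ comes from. Your verifications of $\psi\circ\varepsilon_M = \operatorname{id}$ and $\varepsilon_M\circ\psi = \operatorname{id}$ are accurate, as is the observation that $\psi$ reverses composition. One tiny remark on the last paragraph: the idempotent criterion for indecomposability holds for arbitrary nonzero modules over any ring; invoking the noetherian local and finitely generated hypotheses is unnecessary (though harmless), and the paper itself states the criterion in full generality.
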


\begin{proof}
  For every $A$-module $M$ there is a commutative diagram of $A$-modules,
  \begin{displaymath}
    \xymatrix@C=-7ex@R=7ex{
      \Hom(M,M) \ar[dr]_-{\delta_M \circ -} \ar[rr]^-{\varepsilon_M} & & 
      \Hom(\Hom(M,A),\Hom(M,A)) 
      \\
      & \Hom(M,\Hom(\Hom(M,A),A)) \ar[ur]_-{\Sigma}^-{\cong} &
    }
  \end{displaymath}
  where $\delta_M \colon M \to \Hom(\Hom(M,A),A)$ is the
  biduality homomorphism, and $\Sigma$ is the so-called swap
  isomorphism given by $\Sigma(\theta)(\omega)(m) = \theta(m)(\omega)$
  for $\omega \in \Hom(M,A)$ and $\theta \in
  \Hom(M,\Hom(\Hom(M,A),A))$ and $m \in M$.  By definition, $M$ is
  reflexive if $\delta_M$ is an isomorphism, in which case the map
  \mbox{$\delta_M\!\circ\!-$} is an isomorphism, and the commutative
  diagram above establishes the desired $A$-algebra isomorphism.

  The last assertion in the proposition follows from the already
  established ring isomorphism, as an $A$-module is indecomposable if
  and only if its endomorphism ring has no non-trivial idempotents.
\end{proof}

\begin{cor}
  \label{cor:EndG-EndH}
  Let \mbox{$x,y \in A$} be an exact pair of zero divisors,
  and let \mbox{$a \in A$}. Then there is an isomorphism
  $\End_A(G_a)^{\mathrm{op}} \cong \End_A(H_a)$ of $A$-algebras. In
  particular, $G_a$ is indecomposable if and only if $H_a$ is
  indecomposable.
\end{cor}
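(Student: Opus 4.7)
The plan is to apply Proposition~\ref{prp:End-iso} with $M = H_a$. By Proposition~\ref{prp:gamma}, the module $H_a$ is totally reflexive, hence in particular reflexive, and moreover $G_a \cong \Hom_A(H_a,A)$. Thus Proposition~\ref{prp:End-iso} yields an isomorphism of $A$-algebras
\begin{displaymath}
  \varepsilon_{H_a} \colon \End_A(H_a)^{\mathrm{op}} \stackrel{\cong}{\longrightarrow} \End_A(\Hom_A(H_a,A)) \cong \End_A(G_a).
\end{displaymath}
Passing to opposite algebras gives the desired isomorphism $\End_A(G_a)^{\mathrm{op}} \cong \End_A(H_a)$.

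For the final assertion about indecomposability, I would note that an $A$-module is indecomposable if and only if its endomorphism ring has no non-trivial idempotents, and a ring $R$ has the same idempotents as its opposite $R^{\mathrm{op}}$. Hence $G_a$ is indecomposable if and only if $\End_A(G_a)^{\mathrm{op}}$ has no non-trivial idempotents, if and only if $\End_A(H_a)$ does, if and only if $H_a$ is indecomposable.

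There is essentially no obstacle here: the work is entirely encapsulated in Proposition~\ref{prp:End-iso} together with the duality $G_a \cong \Hom_A(H_a,A)$ from Proposition~\ref{prp:gamma}. The only thing to be mindful of is the correct placement of the opposite ring, but since the idempotents of a ring and of its opposite coincide, the equivalence of indecomposability is unaffected.
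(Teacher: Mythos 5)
Your proof is correct and follows essentially the same route as the paper: it invokes Proposition~\ref{prp:End-iso} together with the duality $G_a \cong \Hom_A(H_a,A)$ from Proposition~\ref{prp:gamma}. The only cosmetic difference is that you apply Proposition~\ref{prp:End-iso} to $M=H_a$ and then pass to opposite algebras, whereas the paper applies it directly to $M=G_a$ (using $\Hom_A(G_a,A)\cong H_a$) to land on the stated isomorphism without the extra step.
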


\begin{proof}
  Apply \prpref{End-iso} to the (totally) reflexive module
  \mbox{$M=G_a$}, and use the fact that \mbox{$\Hom_A(G_a,A) \cong
    H_a$} by \prpref{gamma}.
\end{proof}

The following result is probably folklore. However, since the author
was not able to find a reference, a proof has been included.

\begin{lem}
  \label{lem:Foxby}
  Let $M$ be a finitely generated $A$-module. If $A$ and $\Hom_A(M,M)$
  are isomorphic as $A$-modules, then the canonical homomorphism of
  $A$-algebras, given by $\chi \colon A \to \End_A(M)$, $\chi(a)=a1_M$,
  is also an isomorphism.
\end{lem}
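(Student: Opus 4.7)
The plan is to fix an $A$-module isomorphism $\varphi \colon A \xrightarrow{\cong} \End_A(M)$ supplied by the hypothesis, let $\alpha = \varphi(1)$, and then extract enough multiplicative information about $\alpha$ to pin $\chi$ down. Since $\varphi$ is $A$-linear, $\varphi(a) = a\alpha$ for every $a \in A$; therefore $\alpha$ generates $\End_A(M)$ as an $A$-module, and $\Ann_A(\alpha) = \Ker\varphi = 0$. In particular, there exist unique elements $u, v \in A$ such that $1_M = u\alpha$ and $\alpha^2 = v\alpha$.

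For surjectivity of $\chi$, I would compute
\[
\alpha \;=\; 1_M \circ \alpha \;=\; (u\alpha)\circ\alpha \;=\; u\alpha^2 \;=\; uv\alpha,
\]
whence $(1-uv)\alpha = 0$. Since $\alpha$ has zero annihilator, $uv = 1$, so $u$ is a unit in $A$. Then $\chi(u^{-1}) = u^{-1}\cdot 1_M = u^{-1}\cdot u\alpha = \alpha$, which places the $A$-module generator $\alpha$ of $\End_A(M)$ inside the $A$-submodule $\Im\chi$. Hence $\chi$ is surjective.

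For injectivity, observe that $\Ker\chi = \Ann_A(M)$, since $\chi(a)=0$ says exactly $am = 0$ for every $m \in M$. Moreover, any $a \in \Ann_A(M)$ annihilates all of $\End_A(M)$, as $(a\beta)(m) = a\beta(m) = 0$ for every $\beta \in \End_A(M)$ and $m \in M$; in other words, $\Ann_A(M) \subseteq \Ann_A(\End_A(M))$. Using the $A$-module isomorphism $\End_A(M) \cong A$, the right-hand annihilator equals $\Ann_A(A) = 0$, so $\Ker\chi = 0$.

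The crux of the argument — and the main obstacle — is the step producing the unit $u$. The only nontrivial input one has is the bare $A$-module isomorphism $A \cong \End_A(M)$; turning that into a statement about the composition on $\End_A(M)$ requires exploiting both that $\alpha$ generates $\End_A(M)$ and that it has trivial annihilator, via the identity $\alpha = 1_M \circ \alpha$. Once $u$ is shown to be a unit, the rest is bookkeeping.
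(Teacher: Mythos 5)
Your proof is correct, and it takes a genuinely different route from the paper's. The paper's argument invokes Nakayama's Lemma: choosing $b$ with $\zeta(b)=1_M$ forces $bM=M$, and since $M$ is finitely generated, nonzero, and $A$ is local, Nakayama gives that $b$ is a unit; then $\chi=b\zeta$ is a composite of isomorphisms. You avoid Nakayama entirely by exploiting the ring structure of $\End_A(M)$: writing $1_M=u\alpha$ and $\alpha^2=v\alpha$, the identity $\alpha=1_M\circ\alpha=uv\alpha$ together with $\Ann_A(\alpha)=\Ker\varphi=0$ yields $uv=1$ directly, so $u$ is a unit. Your route is more elementary and also strictly more general — it uses neither that $M$ is finitely generated nor that $A$ is local, so it establishes the statement over any commutative ring and any module. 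One small streamlining is available once $u$ is known to be a unit: since $\chi(a)=a\,1_M=a(u\alpha)=\varphi(au)$, the map $\chi$ is the composite of multiplication by the unit $u$ with the isomorphism $\varphi$, which disposes of surjectivity and injectivity simultaneously, in the same spirit as the paper's closing line.
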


\begin{proof}
  Let \mbox{$\zeta \colon A \to \Hom_A(M,M)$} be an isomorphism of
  $A$-modules. As $\zeta$ is surjective, there exists $b$ in $A$ such
  that \mbox{$b\zeta(1)=\zeta(b)=1_M$}. It follows that \mbox{$bM=M$}.
  Furthermore, $M \neq 0$ since $\Hom_A(M,M)$ is non-zero. Hence
  Nakayama's Lemma \cite[Theorem 2.2]{Mat} implies that $b$ is a unit
  in $A$, and thus $b\zeta$ is also an isomorphism.  It remains to
  note that $b\zeta=\chi$ since
  $b\zeta(a)=ba\zeta(1)=a\zeta(b)=a1_M=\chi(a)$.
\end{proof}

\begin{thm}
  \label{thm:End-A}
  Let $x,y \in A$ be a regular exact pair of zero divisors,
  and let $a \in A$ be weakly regular on $A/(x,y)$.  Then there are
  isomorphisms of $A$-algebras,
  \begin{displaymath}
    \End_A(G_a) \cong A \cong \End_A(H_a).
  \end{displaymath}
  In particular, the endomorphism rings $\End_A(G_a)$ and
  $\End_A(H_a)$ are (commutative, noetherian and) local, and thus the
  $A$-modules $G_a$ and $H_a$ are indecomposable.
\end{thm}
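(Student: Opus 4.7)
\begin{prf}[Proof proposal]
The plan is to reduce everything to a single computation, namely the $A$-module isomorphism $\Hom_A(G_a,G_a)\cong A$, and then invoke the Foxby-type \lemref{Foxby}. Indeed, $G_a$ is (totally) reflexive by \prpref{gamma}, so once we know $\Hom_A(G_a,G_a)\cong A$ as $A$-modules, \lemref{Foxby} upgrades this to an $A$-algebra isomorphism $\chi\colon A\stackrel{\cong}{\to}\End_A(G_a)$. \corref{EndG-EndH} then gives $\End_A(H_a)\cong\End_A(G_a)^{\mathrm{op}}=A$, since $A$ is commutative. (Alternatively, one can first deduce $\Hom_A(H_a,H_a)\cong\Hom_A(G_a,G_a)\cong A$ from \corref{Ext}(c) and then apply \lemref{Foxby} to $H_a$ as well.) The final statement about indecomposability is then formal: $A$ is local by the paper's standing assumption, so $\End_A(G_a)\cong A$ and $\End_A(H_a)\cong A$ are local rings, and a finitely generated module whose endomorphism ring has no nontrivial idempotents is indecomposable.

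The real content, and the main obstacle, is thus the identity $\Hom_A(G_a,G_a)\cong A$. This is an instance ($m=n=1$) of the much more general $\Hom$-computations advertised in the introduction and carried out in \secref{Hom}. For the theorem at hand, I would unpack it as follows. A morphism $G_a\to G_a$ lifts to a matrix $\psi=\bigl(\begin{smallmatrix}p&q\\r&s\end{smallmatrix}\bigr)\in M_2(A)$ satisfying $\psi\gamma_a=\gamma_a\psi'$ for some $\psi'\in M_2(A)$, modulo those $\psi$ of the form $\gamma_a\phi$. Writing out the four entrywise equations and using regularity of the pair $x,y$ (so $(x)\cap(y)=0$, $\Ann_A(x)=(y)$, $\Ann_A(y)=(x)$) immediately forces $r\in(y)$, say $r=r_0y$, together with analogous control on the remaining off-diagonal entries. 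The crucial step is an equation of the shape $(p-s-r_0 a)a+qy\in(x)$, which reduced modulo $(y)$ yields $(p-s-r_0 a)\cdot a\in(x,y)$; the weak regularity of $a$ on $A/(x,y)$ then cancels the $a$ and delivers $p-s-r_0a\in(x,y)$. Tracing through the remaining congruences shows that, modulo matrices of the form $\gamma_a\phi$, the class of $\psi$ is determined by a single element of $A$, namely the common residue of $p$ and $s$.

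To match this with the canonical map $\chi$, injectivity is direct: if $c\cdot 1_{G_a}=0$ then both $(c,0)^t$ and $(0,c)^t$ lie in $\Im\gamma_a$, and inspecting the columns of $\gamma_a$ together with $(x)\cap(y)=0$ gives $c\in(x)\cap(y)=0$. Combined with the surjectivity sketched in the previous paragraph, this shows $\Hom_A(G_a,G_a)\cong A$, and the theorem follows by the route described above. The most delicate point, and the one I expect to require the most care, is precisely the step where weak regularity of $a$ on $A/(x,y)$ is exploited to cancel the factor of $a$; without that hypothesis, the endomorphism ring can genuinely be larger than $A$, as \exaref{decomposable} hints.
\end{prf}
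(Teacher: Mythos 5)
Your proposal follows essentially the same route as the paper: reduce to the $A$-module isomorphism $\Hom_A(G_a,G_a)\cong A$ and then invoke \lemref{Foxby} to upgrade it to an $A$-algebra isomorphism. The paper obtains the module isomorphism in one stroke by specializing \thmref{Hom-G-ab-a}(a) to $b=1$, which gives $\Hom_A(H_a,H_a)\cong\Hom_A(G_a,G_a)\cong H_1\cong A$ (the last step by \corref{free--non-free}(a)); you instead unpack that same matrix calculation inline for the special case $b=1$, so the content is identical, just unwound. One small imprecision in your sketch: the scalar representing $[\psi]$ modulo $\gamma_a M_2(A)$ is not literally ``the common residue of $p$ and $s$'' --- a careful reduction yields $[\psi]=c[I]$ with $c\equiv s\pmod{(y)}$, while $p\equiv c$ only modulo $(x,a)$ --- but the qualitative conclusion that $\Hom_A(G_a,G_a)$ is cyclic on the identity is correct, and your injectivity argument (the two columns of a relation $cI=\gamma_a\phi$ force $c\in(x)\cap(y)=0$) is clean and right.
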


\begin{proof}
  Taking \mbox{$b=1$} in \thmref{Hom-G-ab-a}(a), we get the first two
  isomorphism of $A$-modules in the chain \mbox{$\Hom_A(H_a,H_a) \cong
    \Hom_A(G_a,G_a) \cong H_1 \cong A$}. The last isomorphism is by
  \corref{free--non-free}(a). Now \lemref{Foxby} completes the proof.
\end{proof}

\begin{exa}
  \label{exa:decomposable}
  For a general ring element $a$, the conclusion in \thmref{End-A}
  fails.  For example, if $a$ belongs to $(x)$, say $a=qx$, then the
  commutative diagram,
  \begin{displaymath}
    \xymatrix@R=7ex@C=15ex{
      A^2 \ar[r]^-{\text{\small
    $\left(\!\!
    \begin{array}{cc}
      x & a \\
      0 & y
    \end{array}
    \!\!\right)$}}
    \ar[d]^-{\cong}_-{\text{\small
    $\left(\!\!
    \begin{array}{cc}
      1 & q \\
      0 & 1
    \end{array}
    \!\!\right)$}} & A^2 
    \ar[d]_-{\cong}^-{\text{\small
    $\left(\!\!
    \begin{array}{cc}
      1 & 0 \\
      0 & 1
    \end{array}
    \!\!\right)$}} \\
      A^2 \ar[r]_-{\text{\small
    $\left(\!\!
    \begin{array}{cc}
      x & 0 \\
      0 & y
    \end{array}
    \!\!\right)$}} & A^2
    }
  \end{displaymath}
  shows that
  \begin{displaymath}
    G_a = \Coker \gamma_a = 
    \Coker
    \begin{pmatrix}
      x & a \\
      0 & y
    \end{pmatrix} \cong
    \Coker
    \begin{pmatrix}
      x & 0 \\
      0 & y
    \end{pmatrix} \cong
    A/(x) \oplus A/(y),
  \end{displaymath}
  which is not indecomposable.
\end{exa}

\section{Isomorphism classes} \label{sec:iso}

In this section, we investigate whether two given modules from the
joint family \mbox{$(G_a)_{a \in A} \cup (H_a)_{a \in A}$} are
isomorphic. Of course, they might very well be exactly that since for
example \mbox{$G_1 \cong A \cong H_1$} by \corref{free--non-free}(a),
and furthermore $G_a \cong G_{ua}$ whenever $u$ is a unit, see
\obsref{unit}.

\begin{thm}
  \label{thm:iso-GH}
  Let $x,y \in A$ be a regular exact pair of zero divisors.
  Furthermore, let $a,b$ be elements in $A$ such that:
  \begin{rqm}
  \item $a$ or $b$ is weakly regular on $A/(x,y)$, and
  \item $a$ and $b$ are not both units.
  \end{rqm}
  Then the $A$-modules $G_a$ and $H_b$ are not isomorphic.
\end{thm}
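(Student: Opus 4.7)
The plan is to argue by contradiction: assume there is an isomorphism $\varphi\colon G_a\xrightarrow{\cong}H_b$ and pin down $\End_A(G_a)$ in two different ways to get a clash with hypothesis (2).

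First I would dispose of the case where one of $a,b$ is a unit. If, say, $a$ is a unit, then $G_a\cong A$ by \corref{free--non-free}(a), so $H_b\cong A$ is free; but then \corref{free--non-free}(b) forces $b$ to be a unit as well, contradicting (2). The case where $b$ is a unit is symmetric.

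In the remaining case neither element is a unit. By hypothesis (1)---and after possibly swapping the roles of $a$ and $b$, which is legitimate because \corref{Ext}(b) supplies $\Hom_A(G_a,H_b)\cong\Hom_A(G_b,H_a)$---I may assume $a$ is weakly regular on $A/(x,y)$. Then \thmref{End-A} delivers $\End_A(G_a)\cong A$ as $A$-algebras. On the other hand, the assumed isomorphism $\varphi$ induces an $A$-module isomorphism $\End_A(G_a)\cong\Hom_A(G_a,H_b)$ via $\alpha\mapsto\varphi\circ\alpha$. I would then invoke the $\Hom$-calculation of \secref{Hom}---the natural extrapolation of the identity $\Hom_A(G_{a^m},H_{a^n})\cong H_{a^{m+n}}$ from the main theorem---to identify $\Hom_A(G_a,H_b)\cong H_{ab}$. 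Chaining the two descriptions gives $H_{ab}\cong A$, so $H_{ab}$ is free, whence \corref{free--non-free}(b) forces $ab$ to be a unit of $A$. Since $A$ is local, both $a$ and $b$ must then be units, contradicting (2).

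The logical skeleton above is essentially bookkeeping; the real obstacle is the $\Hom$-identity $\Hom_A(G_a,H_b)\cong H_{ab}$, which is precisely the technical content that the paper defers to \secref{Hom}. If that section instead packages this as the dual identity $\Hom_A(H_b,G_a)\cong G_{ab}$, the argument still goes through: apply $\Hom_A(-,G_a)$ to $\varphi$ to get $\End_A(G_a)\cong\Hom_A(H_b,G_a)\cong G_{ab}$, and conclude the freeness of $G_{ab}$ instead.
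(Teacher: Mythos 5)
Your argument is essentially the paper's: identify $\Hom_A(G_a,H_b)\cong H_{ab}$ via the $\Hom$-computation of Section~\ref{sec:Hom} (this is precisely \thmref{Hom-H-G}(b), which already works under the ``$a$ or $b$'' hypothesis), invoke \thmref{End-A} to get an endomorphism ring $\cong A$, and conclude via \corref{free--non-free}(b) that $H_{ab}$ is not free because $ab$ is a non-unit in the local ring $A$.

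Two small remarks. First, the ``swap the roles of $a$ and $b$'' step is shakier than you present it. Swapping $a\leftrightarrow b$ changes the goal from ``$G_a\not\cong H_b$'' to ``$G_b\not\cong H_a$'', and the $\Hom$-isomorphism $\Hom_A(G_a,H_b)\cong\Hom_A(G_b,H_a)$ from \corref{Ext}(b) does not by itself give that reduction; more to the point, after the swap you no longer have an assumed isomorphism $G_b\cong H_a$ to feed into step~4. (The reduction could be salvaged through the duality $H_a\cong\Hom_A(G_a,A)$ of \prpref{gamma}, which gives $G_a\cong H_b\iff G_b\cong H_a$, but that is an extra step.) The simplest fix is the one you yourself flag as an alternative and the one the paper uses: no swap is needed, since hypothesis (1) gives $\End_A(G_a)\cong A$ \emph{or} $\End_A(H_b)\cong A$, and either of these combined with the assumed isomorphism $\varphi$ yields $\Hom_A(G_a,H_b)\cong A$. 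Second, singling out the case where one of $a,b$ is a unit is harmless but superfluous: since $A$ is local, $ab$ is a non-unit exactly when $a$ and $b$ are not both units, which is hypothesis (2), so the final contradiction already absorbs that case.
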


\begin{proof}
  From the assumption (1) and \thmref{End-A} we have
  \mbox{$\Hom_A(G_a,G_a) \cong A$} or \mbox{$\Hom_A(H_b,H_b) \cong
    A$}. Thus, to prove the result it suffices to argue that
  $\Hom_A(G_a,H_b)$ is not isomorphic to $A$. From the assumption (1)
  and \thmref{Hom-H-G}(b) we have an isomorphism $\Hom_A(G_a,H_b)
  \cong H_{ab}$. It follows from the assumption (2) that $ab$ is not a
  unit, and hence $H_{ab}$ is not free by \corref{free--non-free}(b).
\end{proof}

\begin{thm}
  \label{thm:iso-GG}
  Let $x,y \in A$ be a regular exact pair of zero divisors.
  Let $a \in A$ be weakly regular on the $A$-module $A/(x,y)$, and let
  $b \in A$ be a non-unit. Then
  \begin{prt}
  \item The $A$-modules $G_a$ and $G_{ab}$ are not isomorphic.  
  \item The $A$-modules $H_a$ and $H_{ab}$ are not isomorphic.  
  \end{prt}
\end{thm}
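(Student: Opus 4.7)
\emph{Plan.} I will prove \prtlbl{a} by contradiction using the $\Hom$-computations of \secref{Hom}, and then deduce \prtlbl{b} from \prtlbl{a} via the self-duality $G_c \cong \Hom_A(H_c,A)$ supplied by \prpref{gamma}.

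For \prtlbl{a}, suppose for contradiction that $G_a \cong G_{ab}$. Applying $\Hom_A(G_a,-)$ to this isomorphism yields $\Hom_A(G_a,G_a) \cong \Hom_A(G_a,G_{ab})$. On the one hand, by \thmref{End-A} the weak regularity of $a$ on $A/(x,y)$ gives $\Hom_A(G_a,G_a) \cong A$. On the other hand, the same hypothesis should place us in the scope of \thmref{Hom-G-ab-a}, the result invoked at $b=1$ in the proof of \thmref{End-A}; that theorem computes $\Hom_A(G_a,G_{ab})$ as $G_b$ or $H_b$ (depending on the exact formulation). Combining these, one of $G_b$ or $H_b$ is isomorphic to $A$ and hence free. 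Since $b$ is a non-unit, this contradicts \corref{free--non-free}(b).

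Part \prtlbl{b} then follows from \prtlbl{a} by duality. Suppose $H_a \cong H_{ab}$. Applying the contravariant functor $\Hom_A(-,A)$ and using the identifications $\Hom_A(H_c,A) \cong G_c$ from \prpref{gamma}, we obtain
\[
G_a \;\cong\; \Hom_A(H_a,A) \;\cong\; \Hom_A(H_{ab},A) \;\cong\; G_{ab},
\]
contradicting \prtlbl{a}. Note that the weak regularity hypothesis on $a$ is needed only in \prtlbl{a}; it then transfers automatically to \prtlbl{b} through this duality.

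The main obstacle is matching the hypothesis ``$a$ weakly regular on $A/(x,y)$'' to the precise form of the $\Hom$-computation in \secref{Hom}. Since the proof of \thmref{End-A} already extracts $\End_A(G_a)\cong A$ from \thmref{Hom-G-ab-a}(a) at $b=1$ under exactly this hypothesis, it is reasonable to expect that the general case of the same theorem delivers $\Hom_A(G_a,G_{ab})$ in closed form for arbitrary $b$; modulo this matching, the remainder of the argument is then routine. A sanity check comes from the specialization $b = a^k$ in the main theorem of the introduction, where $\Hom_A(G_a, G_{a^{k+1}}) \cong G_{a^k}$ is indeed non-free whenever $a$ is a non-unit, in agreement with our strategy.
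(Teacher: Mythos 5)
Your proof is correct and takes essentially the same route as the paper: apply \thmref{End-A} and \thmref{Hom-G-ab-a} to show that an isomorphism $G_a \cong G_{ab}$ would force $G_b$ (or $H_b$) to be free, contradicting \corref{free--non-free}(b). The paper uses $\Hom_A(-,G_a)$ together with \thmref{Hom-G-ab-a}(a), yielding $\Hom_A(G_{ab},G_a)\cong H_b$, whereas you use $\Hom_A(G_a,-)$ and \thmref{Hom-G-ab-a}(b), yielding $\Hom_A(G_a,G_{ab})\cong G_b$; both work. For part \prtlbl{b} the paper simply says the argument is ``similar,'' while you derive it cleanly from \prtlbl{a} by dualizing through $\Hom_A(-,A)$ and the identifications of \prpref{gamma} --- a slight and legitimate shortcut.
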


\begin{proof}
  We will only prove part (a), as the proof of (b) is similar.

  It follows from \thmref{End-A} that \mbox{$\Hom_A(G_a,G_a) \cong
    A$}.  Thus, to prove the result it suffices to argue that
  $\Hom_A(G_{ab},G_a)$ is not isomorphic to $A$.  However, from
  \thmref{Hom-G-ab-a}(a) we get that \mbox{$\Hom_A(G_{ab},G_a) \cong
    H_b$} which is not free as $b$ is not a unit, see
  \corref{free--non-free}(b).
\end{proof}

\section{The main theorem} \label{sec:main}

This section contains the main theorem of this paper.  Although the
proof of our main result is quite short, it uses the machinery from
the previous sections and from \secref{Hom} below.  For the last claim
in the theorem, it is useful to keep in mind that \mbox{$H_1 \cong A
  \cong G_1$} by \corref{free--non-free}(a).

\begin{thm}
  \label{thm:main}
  Let \mbox{$x,y \in A$} be a regular exact pair of zero
  divisors. Let~$(b_n)_{n \geqslant 1}$ be a sequence of (not
  necessarily distinct) elements in $A$ which are regular on the
  $A$-module $A/(x,y)$.  If we define $a_n=b_1\cdots b_n$ then
  \begin{displaymath}
    G_{a_1}, G_{a_2}, G_{a_3}, G_{a_4}, \ldots, 
    H_{a_1}, H_{a_2}, H_{a_3}, H_{a_4}, \ldots
  \end{displaymath}
  is a (double) infinite family of indecomposable, non-free, totally
  reflexive $A$-modules, which are pairwise non-isomorphic.
  Furthermore, one has the following identities:
  \begin{align*}
    \Hom_A(H_{a_m},G_{a_n}) \cong \Hom_A(H_{a_n},G_{a_m}) &\cong\, G_{a_ma_n},
    \\
    \Hom_A(G_{a_n},H_{a_m}) \cong \Hom_A(G_{a_m},H_{a_n}) &\cong\, H_{a_ma_n},
    \\
    \Hom_A(G_{a_m},G_{a_n}) \cong \Hom_A(H_{a_n},H_{a_m}) &\cong 
    \left\{\!\!
      \begin{array}{ccl}
        H_{a_m/a_n} & \text{for} & m \geqslant n \\
        G_{a_n/a_m} & \text{for} & m \leqslant n
      \end{array}
    \right.\!\!.
  \end{align*}
\end{thm}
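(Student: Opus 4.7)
The plan is to assemble the theorem from results already proved in \secref[Sections~]{ind}, \secref[]{iso}, and \secref[]{Hom}. First I would record the key preliminary observation that each $a_n = b_1\cdots b_n$ is itself regular on the $A$-module $A/(x,y)$: in a local ring, products of non-units are non-units, and the composite of two monomorphisms is a monomorphism, so weak regularity on a fixed module passes to products. With this in place, \prpref{gamma} delivers total reflexivity of $G_{a_n}$ and $H_{a_n}$; \corref{free--non-free}\prtlbl{b} delivers non-freeness, since $a_n$ is a non-unit; and \thmref{End-A} yields $\End_A(G_{a_n}) \cong A \cong \End_A(H_{a_n})$, so that the endomorphism rings, being local, have no non-trivial idempotents and hence $G_{a_n}$, $H_{a_n}$ are indecomposable.

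Next I would verify pairwise non-isomorphism in three cases. A mixed pair $G_{a_m}$ versus $H_{a_n}$ falls under \thmref{iso-GH}, since $a_m$ is weakly regular on $A/(x,y)$ and neither $a_m$ nor $a_n$ is a unit. For two $G$-modules $G_{a_m}$ and $G_{a_n}$ with $m<n$, I would write $a_n = a_m\cdot(b_{m+1}\cdots b_n)$, observe that the second factor is a non-unit, and apply \thmref{iso-GG}\prtlbl{a}; the analogous argument with \thmref{iso-GG}\prtlbl{b} separates $H_{a_m}$ from $H_{a_n}$.

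The remaining $\Hom$-identities I would read off from the main theorems of \secref{Hom} combined with the horizontal symmetries. Taking $i=0$ in \corref{Ext}\prtlbl{a}, \prtlbl{b}, \prtlbl{c} supplies the three left-hand isomorphisms in each line of the display. The identifications $\Hom_A(H_{a_m},G_{a_n}) \cong G_{a_m a_n}$ and $\Hom_A(G_{a_m},H_{a_n}) \cong H_{a_m a_n}$ are \thmref{Hom-H-G}; for the piecewise formula I split on $m$ versus $n$, noting that when $m \geq n$ we may write $c = b_{n+1}\cdots b_m$ so $a_m = a_n c$, whereupon \thmref{Hom-G-ab-a}\prtlbl{a} gives $\Hom_A(G_{a_m},G_{a_n}) \cong H_c = H_{a_m/a_n}$. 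The case $m \leq n$ is symmetric, and the case $m=n$ collapses consistently to $H_1 \cong A \cong G_1$ via \corref{free--non-free}\prtlbl{a}. The real work, and the main obstacle, is not in this assembly step but in establishing the Hom-computations of \secref{Hom} that are invoked at the end; once those are granted, the theorem is a matter of bookkeeping.
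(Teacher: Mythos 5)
Your proposal is correct and follows essentially the same route as the paper's proof: regularity of the products $a_n$ is noted, total reflexivity comes from \prpref{gamma}, non-freeness from \corref{free--non-free}\prtlbl{b}, indecomposability from \thmref{End-A}, pairwise non-isomorphism from \thmref[Theorems~]{iso-GH} and \thmref[]{iso-GG} via the factorization $a_n = a_m(b_{m+1}\cdots b_n)$, and the Hom-identities from \thmref[Theorems~]{Hom-H-G} and \thmref[]{Hom-G-ab-a}. The only cosmetic difference is that you spell out the invocation of \corref{Ext} for the left-hand symmetry isomorphisms, whereas the paper absorbs this into \thmref[Theorems~]{Hom-H-G} and \thmref[]{Hom-G-ab-a} directly.
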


\begin{proof}
  It follows from \prpref{gamma} that $G_{a_n}$ and $H_{a_n}$ are
  totally reflexive. As $a_n$ is not a unit, $G_{a_n}$ and $H_{a_n}$
  are non-free by \corref{free--non-free}(b). Since $a_n$ is regular
  on $A/(x,y)$, it follows from \thmref{End-A} that $G_{a_n}$ and
  $H_{a_n}$ are indecomposable.

  By \thmref{iso-GH}, the modules $G_{a_m}$ and $H_{a_n}$ are not
  isomorphic. If \mbox{$m<n$} then $a_n=a_mb$ where $b=b_{m+1}\cdots
  b_n$ (which is not a unit). Thus \thmref{iso-GG} gives that
  $G_{a_m}$ and $G_{a_n}$ are not isomorphic, and furthermore that
  $H_{a_m}$ and $H_{a_n}$ are not isomorphic. Hence the
  modules in the given list are pairwise non-isomorphic.
 
  The Hom-identities are immediate from \thmref[Theorems~]{Hom-H-G} and
  \thmref[]{Hom-G-ab-a}.
\end{proof}

\enlargethispage{7ex}

\section{Computation of Hom-modules} \label{sec:Hom}

In this section, we will explicitly compute certain $\Hom$-modules.
For example, we will prove that $\Hom_A(H_b,G_a)$ is isomorphic to
$G_{ab}$. In order to carry out such computations, we need a concrete
way to represent the module of homomorphisms between two given
finitely generated $A$-modules. The representation we will use can be
found in e.g.~Greuel and Pfister \cite[Example~2.1.26]{MR2363237}; we
give a recap in \prpref[]{CompHom}.

\begin{obs}
  \label{obs:diagram}
  Consider the following commutative diagram of $A$-modules with exact
  rows and columns:
  \begin{displaymath}
    \xymatrix@R=4.1ex@C=4.1ex{
      {} & {} & 0 & 0 \\
      0 \ar[r] & K \ar[r]^-{\iota} & L \ar[r]^-{\lambda} \ar[u] & L' \ar[u] \\
      {} & {} & M \ar[r]^-{\mu} \ar[u]^-{\alpha} & M' \ar[u]_-{\alpha'} \\
      {} & {} & N \ar[r]^-{\nu} \ar[u]^-{\beta} & N' \ar[u]_-{\beta'}
    }
  \end{displaymath}
  Define submodules \mbox{$I \subseteq Q \subseteq M$} by
  \mbox{$I=\Im\beta$} and \mbox{$Q=\mu^{-1}(\Im\beta')$}. Then
  $\alpha$ maps $Q$ onto \mbox{$\Ker\lambda = \Im \iota$}, and the kernel
  of $\alpha$'s restriction $\alpha \colon Q \twoheadrightarrow
  \Im\iota$ is exactly $I$.  Consequently, there is an isomorphism of
  $A$-modules, 
  \begin{displaymath}
      Q/I \stackrel{\cong}{\longrightarrow} K,
  \end{displaymath}
  which maps $[q]_I \in Q/I$ to the unique $x \in K$ with
  $\iota(x)=\alpha(q)$. 
\end{obs}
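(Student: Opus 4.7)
The plan is a four-step diagram chase. First, I would show that $\alpha(Q) \subseteq \Ker\lambda = \Im\iota$: for $q \in Q$, choose $n' \in N'$ with $\mu(q) = \beta'(n')$; commutativity of the upper right square and exactness of the right column at $M'$ then give $\lambda\alpha(q) = \alpha'\mu(q) = \alpha'\beta'(n') = 0$, and exactness of the top row at $L$ identifies $\Ker\lambda$ with $\Im\iota$.

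Second, I would verify that $\alpha|_Q$ surjects onto $\Im\iota$. Given $x \in K$, surjectivity of $\alpha$ (exactness of the middle column at $L$) yields $m \in M$ with $\alpha(m) = \iota(x)$; then $\alpha'\mu(m) = \lambda\alpha(m) = \lambda\iota(x) = 0$ forces $\mu(m) \in \Ker\alpha' = \Im\beta'$, so $m \in \mu^{-1}(\Im\beta') = Q$ by definition.

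Third, I would identify $\Ker(\alpha|_Q)$ with $I$. For the inclusion $I \subseteq Q$, commutativity of the lower square gives $\mu\beta(n) = \beta'\nu(n) \in \Im\beta'$, so $\beta(n) \in Q$; that $\alpha$ kills $I$ is immediate from $\alpha\beta = 0$. Conversely, any $q \in Q$ with $\alpha(q) = 0$ lies in $\Ker\alpha = \Im\beta = I$ by exactness of the middle column at $M$.

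Combining these three steps, $\alpha|_Q$ factors through a surjection $Q/I \twoheadrightarrow \Im\iota$ with trivial kernel, hence an isomorphism; composing with the inverse of $\iota \colon K \to \Im\iota$ (an isomorphism by exactness of the top row at $K$) produces the asserted map $Q/I \to K$, whose description on representatives $[q]_I \mapsto \iota^{-1}(\alpha(q))$ matches the formula in the statement. There is no real difficulty here beyond carefully tracking which commuting square and which exactness hypothesis is invoked at each stage.
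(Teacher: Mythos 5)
Your proof is correct and complete. The paper states this as an \emph{Observation} and offers no written proof, so there is no argument to compare against; your four-step diagram chase (show $\alpha(Q)\subseteq\Im\iota$, show surjectivity of $\alpha|_Q$ onto $\Im\iota$, identify $\Ker(\alpha|_Q)$ with $I$, then compose the induced isomorphism $Q/I\cong\Im\iota$ with $\iota^{-1}$) is exactly the routine verification the author evidently leaves to the reader, and every invocation of commutativity and exactness you make is the right one.
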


\begin{prp}
  \label{prp:CompHom}
  Let $\rho_i$ be an \mbox{$m_i \!\times\! n_i$} matrix with entries in
  $A$, where \mbox{$i=1,2$}. Consider \mbox{$\rho_i \colon A^{n_i} \to
    A^{m_i}$} as an $A$-linear map which acts on column vectors by
  multiplication from the left. Then
  \begin{displaymath}
   Q_{\rho_1,\rho_2} = \big\{\,\psi \in \M_{m_2 \times m_1}(A) \,\big|\, 
          \psi\rho_1=\rho_2\xi \text{ for some } 
          \xi \in \M_{n_2 \times n_1}(A) \,\big\}
  \end{displaymath}
  is an $A$-submodule of $\M_{m_2 \times m_1}(A)$ which contains
  \mbox{$\rho_2\M_{n_2 \times m_1}(A)$} as a
  submodule. Furthermore, there is an isomorphism of $A$-modules,
  \begin{displaymath}
    \Hom_A(\Coker \rho_1,\Coker \rho_2) \cong 
    Q_{\rho_1,\rho_2}/(\rho_2\M_{n_2 \times m_1}(A)).
  \end{displaymath}
\end{prp}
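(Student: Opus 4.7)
The plan is to apply the left-exact contravariant functor $\Hom_A(-,\Coker\rho_2)$ to the presentation
\[
A^{n_1} \stackrel{\rho_1}{\longrightarrow} A^{m_1} \longrightarrow \Coker\rho_1 \longrightarrow 0
\]
of $\Coker\rho_1$ and then make all modules that appear explicit in terms of matrices. This yields a left-exact sequence
\[
0 \longrightarrow \Hom_A(\Coker\rho_1,\Coker\rho_2) \longrightarrow \Hom_A(A^{m_1},\Coker\rho_2) \stackrel{-\circ\rho_1}{\longrightarrow} \Hom_A(A^{n_1},\Coker\rho_2),
\]
so the target module is the kernel of the right-hand map, and the task reduces to identifying this kernel with $Q_{\rho_1,\rho_2}/(\rho_2\M_{n_2\times m_1}(A))$.

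The first step is to use the canonical isomorphism $\Hom_A(A^r,N)\cong N^r$ for any $A$-module $N$ and any $r\geqslant 0$. Applied to \mbox{$N=\Coker\rho_2 = A^{m_2}/\rho_2 A^{n_2}$}, it gives
\[
\Hom_A(A^r,\Coker\rho_2) \;\cong\; \M_{m_2\times r}(A)/(\rho_2\M_{n_2\times r}(A)),
\]
where a homomorphism is represented by the $m_2\times r$ matrix whose $j$-th column lists the components of the image of the $j$-th standard basis vector. Under this identification, precomposition with $\rho_1\colon A^{n_1}\to A^{m_1}$ becomes right multiplication by $\rho_1$, sending $[\psi]\mapsto [\psi\rho_1]$, which is well defined because $(\rho_2\chi)\rho_1 = \rho_2(\chi\rho_1)$ for any $\chi\in \M_{n_2\times m_1}(A)$.

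The second step is to read off the kernel of right multiplication by $\rho_1$ on $\M_{m_2\times m_1}(A)/(\rho_2\M_{n_2\times m_1}(A))$: a class $[\psi]$ is killed precisely when $\psi\rho_1\in\rho_2\M_{n_2\times n_1}(A)$, i.e.\ when $\psi\rho_1=\rho_2\xi$ for some $\xi\in \M_{n_2\times n_1}(A)$; that is, exactly when $\psi\in Q_{\rho_1,\rho_2}$. In particular $Q_{\rho_1,\rho_2}$ is an $A$-submodule of $\M_{m_2\times m_1}(A)$ that contains $\rho_2\M_{n_2\times m_1}(A)$, and the kernel under discussion equals $Q_{\rho_1,\rho_2}/(\rho_2\M_{n_2\times m_1}(A))$, which is the asserted formula.

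Alternatively, and essentially equivalently, one can feed the diagram with rows/columns given by the two presentation sequences into \obsref{diagram}: take the columns to be left multiplication by $\rho_2$ between matrix modules and the rows to be right multiplication by $\rho_1$, set $L$ and $L'$ to be the cokernels of those columns, and let $K = \Hom_A(\Coker\rho_1,\Coker\rho_2)$ sit in the upper-left corner via the embedding above. The observation then identifies $K$ with $Q/I$, where $Q = \mu^{-1}(\Im\beta') = Q_{\rho_1,\rho_2}$ and $I = \Im\beta = \rho_2\M_{n_2\times m_1}(A)$, yielding the claim. The only real obstacle is the bookkeeping required to verify that, through the identification $\Hom_A(A^r,\Coker\rho_2)\cong \M_{m_2\times r}(A)/(\rho_2\M_{n_2\times r}(A))$, precomposition with $\rho_1$ really does become right multiplication by $\rho_1$; this is a routine but careful matching of standard bases.
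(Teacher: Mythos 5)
Your proposal is correct and follows essentially the same route as the paper: apply $\Hom_A(-,\Coker\rho_2)$ to the presentation of $\Coker\rho_1$, identify the free-module Hom's with matrix modules so that pre/post-composition becomes right/left matrix multiplication, and read off the kernel (the paper packages this last step via the diagram chase in Observation~\pgref{obs:diagram}, which you also invoke as an alternative). The only cosmetic difference is that you work directly with the single left-exact row and the quotient $\M_{m_2\times r}(A)/(\rho_2\M_{n_2\times r}(A))$, whereas the paper displays the full commutative diagram built from both presentation sequences before citing the observation.
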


\begin{proof}
  Set $C_i = \Coker \rho_i$. From the
  canonical exact sequences,
  \begin{displaymath}
    A^{n_i} \stackrel{\rho_i}{\longrightarrow} A^{m_i} 
    \stackrel{\pi_i}{\longrightarrow} C_i \longrightarrow 0,
  \end{displaymath}
  we get a commutative diagram of $A$-modules with exact rows and
  columns,
  \begin{displaymath}
    \xymatrix{
      {} & {} & 0 & 0 \\
      0 \ar[r] & \Hom_A(C_1,C_2) \ar[r]^-{\circ \pi_1} & \Hom_A(A^{m_1},C_2) \ar[r]^-{\circ \rho_1} \ar[u] & \Hom_A(A^{n_1},C_2) \ar[u] \\
      {} & {} & \Hom_A(A^{m_1},A^{m_2}) \ar[r]^-{\circ \rho_1} \ar[u]^-{\pi_2 \circ} & \Hom_A(A^{n_1},A^{m_2}) \ar[u]_-{\pi_2 \circ} \\
      {} & {} & \Hom_A(A^{m_1},A^{n_2}) \ar[r]^-{\circ \rho_1} \ar[u]^-{\rho_2 \circ} & \Hom_A(A^{n_1},A^{n_2}) \ar[u]_-{\rho_2 \circ}
    }
  \end{displaymath}

  We identify $\Hom_A(A^u,A^v)$ with the $A$-module $\M_{v \times
    u}(A)$ of $v \times u$ matrices with entries in $A$ (a matrix is
  viewed as an $A$-linear map which acts on column vectors by
  multiplication from the left). With this identification, the maps
  \mbox{$\circ\, \rho_1$}, respectively, \mbox{$\rho_2\, \circ$}, in
  the bottom square in the diagram above become multiplication by the
  matrix $\rho_1$ from the right, respectively, multiplication by
  $\rho_2$ from the left. 

  The proposition is now immediate from \obsref{diagram}. The isomorphism 
  \begin{displaymath}
    Q_{\rho_1,\rho_2}/(\rho_2\M_{n_2 \times m_1}(A))
    \stackrel{\cong}{\longrightarrow} \Hom_A(C_1,C_2)
  \end{displaymath}
  is given by sending $[\psi]$ to the unique homomorphism $\eta \in
  \Hom_A(C_1,C_2)$ such that $\eta \circ \pi_1 = \pi_2 \circ \psi$.
  Here $[\,\cdot\,]$ denotes coset with respect to $\rho_2\M_{n_2
    \times m_1}(A)$.
\end{proof}

\enlargethispage{2ex}

\begin{lem}
  \label{lem:generators-of-Q-2}
  Let $x,y \in A$ be a regular exact pair of zero divisors.
  Furthermore, let $a,b \in A$ be elements such that $a$ or $b$ is
  weakly regular on the $A$-module $A/(x,y)$. Then the $A$-submodule,
  \begin{displaymath}
    Q_{\eta_b,\gamma_a} = \big\{\, \psi \in \M_2(A) \,\big|\, 
    \psi\eta_b=\gamma_a\xi 
    \text{ for some } \xi \in \M_2(A) \,\big\}
  \end{displaymath}
  of $\M_2(A)$ is generated by the following five matrices,
  \begin{displaymath}
    \psi_1 = 
      \begin{pmatrix}
        0 & 1 \\
        0 & 0
      \end{pmatrix}, \
    \psi_2 = 
      \begin{pmatrix}
        0 & 0 \\
        x & b
      \end{pmatrix}, \
    \psi_3 = 
      \begin{pmatrix}
        0 & 0 \\
        0 & y
      \end{pmatrix}, \
    \psi_4 = 
      \begin{pmatrix}
        x & 0 \\
        0 & 0
      \end{pmatrix}, \
    \psi_5 = 
      \begin{pmatrix}
        a & 0 \\
        y & 0
      \end{pmatrix}.
  \end{displaymath}
\end{lem}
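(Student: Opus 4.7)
The plan is in two phases. First I would verify that each of the five matrices $\psi_1,\ldots,\psi_5$ lies in $Q_{\eta_b,\gamma_a}$ by exhibiting an explicit $\xi_i \in \M_2(A)$ with $\psi_i\eta_b = \gamma_a\xi_i$. All five checks reduce to the identity $xy=0$ (valid since $y \in \Ann_A(x)$): one finds $\psi_2\eta_b = \psi_3\eta_b = 0$, while $\psi_1\eta_b = \gamma_a\psi_1$, $\psi_4\eta_b = \gamma_a\bigl(\begin{smallmatrix} 0 & -b \\ 0 & 0 \end{smallmatrix}\bigr)$, and $\psi_5\eta_b = \gamma_a\bigl(\begin{smallmatrix} 0 & 0 \\ y & -b \end{smallmatrix}\bigr)$. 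Since $Q_{\eta_b,\gamma_a}$ is an $A$-submodule of $\M_2(A)$, every $A$-linear combination of the $\psi_i$ then lies in $Q_{\eta_b,\gamma_a}$.

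Second, I would show that every $\psi = (p_{ij}) \in Q_{\eta_b,\gamma_a}$ admits such a decomposition. Fix a witness $\xi = (q_{ij})$ with $\psi\eta_b = \gamma_a\xi$, and read off the four scalar equations
\begin{displaymath}
(1)\ p_{11}y = xq_{11} + aq_{21},\quad (2)\ -p_{11}b + p_{12}x = xq_{12} + aq_{22},
\end{displaymath}
\begin{displaymath}
(3)\ p_{21}y = yq_{21},\quad (4)\ -p_{21}b + p_{22}x = yq_{22}.
\end{displaymath}
Expanding $\sum_{i=1}^{5} s_i\psi_i$ produces the matrix with entries $s_4x+s_5a$, $s_1$, $s_2x+s_5y$, $s_2b+s_3y$, so the task reduces to producing $s_2,s_3,s_4,s_5 \in A$ with $p_{11} = s_4x+s_5a$, $p_{21} = s_2x+s_5y$, $p_{22} = s_2b+s_3y$; the coefficient $s_1 := p_{12}$ is unconstrained and equation (2) will not be needed.

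The key step is pinning down $s_5$. If $a$ is weakly regular on $A/(x,y)$, reducing (1) modulo $(x,y)$ forces $q_{21} \in (x,y)$; if instead $b$ is weakly regular, reducing (4) forces $p_{21} \in (x,y)$. In either case, substituting the resulting expression into (3), using $xy=0$, and applying $\Ann_A(y)=(x)$ yields $s_2,s_2',s_5 \in A$ with $p_{21} = s_2x+s_5y$ and $q_{21} = s_5y+s_2'x$, sharing the common $y$-coefficient $s_5$. Substituting this $q_{21}$ into (1) produces $(p_{11}-s_5a)y = (q_{11}+s_2'a)x$, which lies in $(x)\cap(y)=0$ by the regularity of the pair; hence $p_{11}-s_5a \in \Ann_A(y) = (x)$, giving $s_4$. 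A symmetric computation with (4) yields $(p_{22}-s_2b)x \in (x)\cap(y) = 0$, so $p_{22}-s_2b \in \Ann_A(x) = (y)$, giving $s_3$.

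The main obstacle is the step just described in which the weak regularity of $a$ or $b$ on $A/(x,y)$ is leveraged to place $p_{21}$ in $(x,y)$ with a $y$-coefficient matching that of $q_{21}$: the hypothesis must be activated modulo $(x,y)$ and then reconciled with equation (3) before $s_5$ can be extracted. Once $s_5$ is in hand, the remaining decomposition is dictated mechanically by the identities $(x)\cap(y) = 0$, $\Ann_A(x) = (y)$, and $\Ann_A(y) = (x)$.
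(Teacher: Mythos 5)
Your proof is correct and follows essentially the same route as the paper: verify membership of the five $\psi_i$ by exhibiting explicit witnesses $\xi_i$; then, for arbitrary $\psi$, use entry $(2,1)$ of the matrix equation together with a case split on whether $a$ or $b$ is weakly regular to place $p_{21}$ in $(x,y)$, after which $(x)\cap(y)=0$ and the annihilator identities determine all remaining coefficients. The only cosmetic difference is bookkeeping: you carry $q_{21}=s_2'x+s_5y$ and $p_{21}=s_2x+s_5y$ with a shared $y$-coefficient, where the paper instead records $b_{21}-c_{21}=qx$ separately and substitutes it into entry $(1,1)$; these encode the same information.
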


\begin{proof}
  First note that if we define,
  \begin{displaymath}
    \xi_1 = 
      \begin{pmatrix}
        0 & 1 \\
        0 & 0
      \end{pmatrix}, \
    \xi_2 = 
      \begin{pmatrix}
        0 & 0 \\
        0 & 0
      \end{pmatrix}, \
    \xi_3 = 
      \begin{pmatrix}
        0 & 0 \\
        0 & 0
      \end{pmatrix}, \
    \xi_4 = 
      \begin{pmatrix}
        0 & -b \\
        0 & 0
      \end{pmatrix}, \
    \xi_5 = 
      \begin{pmatrix}
        0 & 0 \\
        y & -b
      \end{pmatrix},
  \end{displaymath}
  then $\psi_i\eta_b=\gamma_a\xi_i$, and hence $\psi_i$ belongs to
  $Q_{\eta_b,\gamma_a}$.

  Next we show that $\psi_1,\ldots,\psi_5$ generate all of
  $Q_{\eta_b,\gamma_a}$. To this end, let $\psi=(b_{ij})$ be any
  matrix in $Q_{\eta_b,\gamma_a}$, that is, there exists
  $\xi=(c_{ij})$ such that $\psi\eta_b=\gamma_a\xi$; i.e.
  \begin{displaymath}
    \tag{\text{$*$}}
    \begin{pmatrix}
      b_{11}y & -b_{11}b+b_{12}x \\
      b_{21}y & -b_{21}b+b_{22}x
    \end{pmatrix}
    =
    \begin{pmatrix}
      c_{11}x+c_{21}a & c_{12}x+c_{22}a \\
      c_{21}y & c_{22}y      
    \end{pmatrix}.
  \end{displaymath}
  We must prove the existence of $f_1,\ldots,f_5$ in $A$ such that
  $\psi = \sum_{i=1}^5f_i\psi_i$, that is,
  \begin{displaymath}
    \tag{\text{$**$}}
    \begin{pmatrix}
      b_{11} & b_{12} \\
      b_{21} & b_{22}
    \end{pmatrix}
    =
    \begin{pmatrix}
      f_4x+f_5a & f_1 \\
      f_2x+f_5y & f_2b+f_3y
    \end{pmatrix}.
  \end{displaymath}
  Of course, if we define \mbox{$f_1=b_{12}$} then entry $(1,2)$ in
  $(**)$ holds. Also note that from entry $(2,1)$ in $(*)$ we get
  $(b_{21}-c_{21})y=0$, and hence $b_{21}-c_{21}=qx$ for some $q$.

  Below, we construct $f_2$ and $f_5$ such that entry $(2,1)$ in
  $(**)$ holds, that is,
  \begin{displaymath}
    \tag{\text{$\dagger$}}
    b_{21} = f_2x+f_5y.
  \end{displaymath}
  However, first we demonstrate how $f_3$ and $f_4$ can be constructed
  from $(*)$ and $(\dagger)$ such that entries $(1,1)$ and $(2,2)$ in
  $(**)$ hold:

  \emph{Existence of $f_3$}: From entry $(2,2)$ in $(*)$ we get
  $-b_{21}b+b_{22}x=c_{22}y$. Combining this with $(\dagger)$ we
  obtain $(b_{22}-f_2b)x=(c_{22}+f_5b)y$.  As the exact pair of zero
  divisors $x,y$ is regular, see \dfnref{regular}, it follows that
  $b_{22}-f_2b=f_3y$ for some $f_3$. Thus entry $(2,2)$ in $(**)$
  holds.

  \emph{Existence of $f_4$}: From entry $(1,1)$ in $(*)$, from the
  equation \mbox{$b_{21}-c_{21}=qx$} found above, and from
  $(\dagger)$, we get the following equalities,
  \begin{displaymath}
    b_{11}y = c_{11}x+c_{21}a = c_{11}x+(b_{21}-qx)a 
    = (c_{11}+f_2a-qa)x+f_5ay.
  \end{displaymath}
  Consequently, $(b_{11}-f_5a)y \in (x)$.  As the pair $x,y$ is
  regular, see \dfnref{regular}, it follows that
  \mbox{$b_{11}-f_5a=f_4x$} for some $f_4$. This shows that entry
  $(1,1)$ in $(**)$ holds.

  It remains to prove that $(\dagger)$ holds, i.e.~that $b_{21}$
  belongs to $(x,y)$. The proof is divided into two cases:

  \emph{Existence of $(\dagger)$ in the case where $a$ is weakly
    regular on $A/(x,y)$:}

  From entry $(1,1)$ in $(*)$, we have \mbox{$c_{21}a=-c_{11}x+b_{11}y
    \in (x,y)$}. As $a$ is weakly regular on $A/(x,y)$, it follows
  that \mbox{$c_{21} \in (x,y)$}. Combining this with the equation
  \mbox{$b_{21}-c_{21}=qx$} found above, it follows that \mbox{$b_{21}
    \in (x,y)$}.

  \emph{Existence of $(\dagger)$ in the case where $b$ is weakly
    regular on $A/(x,y)$:}

  From entry $(2,2)$ in $(*)$, we have \mbox{$b_{21}b=b_{22}x-c_{22}y
    \in (x,y)$}. As $b$ is weakly regular on $A/(x,y)$, it follows
  that \mbox{$b_{21} \in (x,y)$} as desired.
\end{proof}

\begin{obs}
  \label{obs:opposite pair}
  Of course, the modules $G_a$ and $H_a$ do not only depend on the
  ring element $a$, but also on the exact pair of zero divisors $x,y$.
  A more precise notation would therefore be $G_a^{x,y}$ and
  $H_a^{x,y}$. 

  Note that if $x,y$ is a (regular) exact pair of zero divisors, then
  so is $y,x$. By \dfnref{GH} and \obsref{unit}, it follows that
  $G_a^{y,x} \cong H_a^{x,y}$ and $H_a^{y,x} \cong G_a^{x,y}$.
\end{obs}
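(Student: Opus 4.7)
The plan is to observe that both the defining conditions and the constructions are essentially symmetric in $x$ and $y$, modulo a sign. The argument splits cleanly into a preliminary check and a matrix computation that is really just a relabelling.

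First, I would verify that if $x,y$ is a (regular) exact pair of zero divisors, then so is $y,x$. This is immediate from \dfnref{xy}, since the conditions $\Ann_A(x)=(y)$ and $\Ann_A(y)=(x)$ are manifestly symmetric in $x$ and $y$; similarly, condition \eqclbl{iii} of \lemref{regular}, namely $(x)\cap(y)=0$, is symmetric, so regularity of the pair is preserved.

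Next, I would simply write out the matrices from \dfnref{GH} for the pair $y,x$ in place of $x,y$:
\begin{displaymath}
  \gamma_a^{y,x}=\begin{pmatrix} y & a \\ 0 & x \end{pmatrix}
  \quad\text{and}\quad
  \eta_a^{y,x}=\begin{pmatrix} x & -a \\ 0 & y \end{pmatrix},
\end{displaymath}
and compare them with the matrices for the pair $x,y$. Inspection gives the equalities
\begin{displaymath}
  \gamma_a^{y,x}=\eta_{-a}^{x,y}
  \quad\text{and}\quad
  \eta_a^{y,x}=\gamma_{-a}^{x,y},
\end{displaymath}
so on passing to cokernels one obtains $G_a^{y,x}=H_{-a}^{x,y}$ and $H_a^{y,x}=G_{-a}^{x,y}$.

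To conclude, I would invoke \obsref{unit} with the unit $u=-1$, which gives $H_{-a}^{x,y}\cong H_a^{x,y}$ and $G_{-a}^{x,y}\cong G_a^{x,y}$. Combining this with the two equalities above yields the desired isomorphisms $G_a^{y,x}\cong H_a^{x,y}$ and $H_a^{y,x}\cong G_a^{x,y}$. There is no serious obstacle here: the entire argument is a bookkeeping exercise, and the only mild subtlety is noticing the sign swap $a\mapsto -a$ that arises from the minus sign in the definition of $\eta_a$, which is neutralised by the last sentence of \obsref{unit}.
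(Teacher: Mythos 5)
Your argument is correct and is exactly the route the paper intends; the paper itself only gestures at the proof by citing Definition (3.3) and Observation (3.4), and your write-up supplies precisely the matrix comparison ($\gamma_a^{y,x}=\eta_{-a}^{x,y}$, $\eta_a^{y,x}=\gamma_{-a}^{x,y}$) and the application of $G_{-a}\cong G_a$, $H_{-a}\cong H_a$ that those citations are meant to trigger.
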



\begin{thm}
  \label{thm:Hom-H-G}
  Let $x,y \in A$ be a regular exact pair of zero divisors.
  Furthermore, let $a,b \in A$ be elements such that $a$ or $b$ is
  weakly regular on the $A$-module $A/(x,y)$. Then there are
  isomorphisms of $A$-modules,
  \begin{prt}
  \item $\Hom_A(H_a,G_b) \cong \Hom_A(H_b,G_a) \cong G_{ab}$.
  \item $\Hom_A(G_a,H_b) \cong \Hom_A(G_b,H_a) \cong H_{ab}$.
  \end{prt}
\end{thm}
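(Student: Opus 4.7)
The plan is to prove $\Hom_A(H_b, G_a) \cong G_{ab}$ first and deduce the remaining identities from it. \prpref{CompHom} gives an isomorphism $\Hom_A(H_b, G_a) \cong Q_{\eta_b, \gamma_a}/(\gamma_a \M_2(A))$, and \lemref{generators-of-Q-2} (which is where the hypothesis that $a$ or $b$ is weakly regular on $A/(x,y)$ enters) provides the five explicit generators $\psi_1, \ldots, \psi_5$ of the module $Q_{\eta_b, \gamma_a}$.

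Next I would express the submodule $\gamma_a \M_2(A)$ in terms of these generators. Multiplying $\gamma_a$ by the four elementary matrices $E_{ij}$ gives $\gamma_a E_{11} = \psi_4$, $\gamma_a E_{21} = \psi_5$, $\gamma_a E_{12} = x\psi_1$, and $\gamma_a E_{22} = a\psi_1 + \psi_3$. Thus in the quotient $[\psi_4]$ and $[\psi_5]$ vanish and $[\psi_3] = -a[\psi_1]$, so the quotient is generated by $[\psi_1]$ and $[\psi_2]$. This yields a surjection $\varphi \colon A^2 \twoheadrightarrow Q_{\eta_b,\gamma_a}/(\gamma_a\M_2(A))$ sending $e_1, e_2$ to $[\psi_1], [\psi_2]$, and the task reduces to proving $\Ker \varphi = \Im \gamma_{ab}$, which would identify the quotient with $\Coker \gamma_{ab} = G_{ab}$.

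The inclusion $\Im \gamma_{ab} \subseteq \Ker \varphi$ is immediate from the computations above: $x\psi_1 = \gamma_a E_{12}$, and a direct check gives $y\psi_2 = b\psi_3$, so $ab\psi_1 + y\psi_2 = b(a\psi_1 + \psi_3) \in \gamma_a\M_2(A)$. The reverse inclusion is the main technical obstacle. Given a relation $f_1\psi_1 + f_2\psi_2 = \gamma_a M$ with $M = (m_{ij})$, the four resulting scalar equations can be unwound step-by-step using the regularity of the pair: the equation $f_2 x = y m_{21}$ places both sides in $(x) \cap (y) = 0$, so $f_2 \in \Ann_A(x) = (y)$, say $f_2 = y g_2$; the equation $f_2 b = y m_{22}$ combined with $\Ann_A(y) = (x)$ then forces $m_{22} = g_2 b - x\ell$ for some $\ell$; and substituting into $f_1 = x m_{12} + a m_{22}$ produces $(f_1, f_2)^{t} = \gamma_{ab}(m_{12} - a\ell, g_2)^{t} \in \Im \gamma_{ab}$.

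All remaining identities follow by symmetry. The swap isomorphisms $\Hom_A(H_a, G_b) \cong \Hom_A(H_b, G_a)$ and $\Hom_A(G_a, H_b) \cong \Hom_A(G_b, H_a)$ are the case $i = 0$ of \corref{Ext}\prtlbl{a} and \prtlbl{b}. Finally, to obtain the remaining identity $\Hom_A(G_a, H_b) \cong H_{ab}$ in part \prtlbl{b}, I would invoke \obsref{opposite pair}: the swapped pair $y, x$ is still a regular exact pair of zero divisors, and the regularity hypothesis persists since $A/(y,x) = A/(x,y)$. Applying the already-established part \prtlbl{a} to this pair and translating back then yields $\Hom_A(G_a^{x,y}, H_b^{x,y}) \cong \Hom_A(H_a^{y,x}, G_b^{y,x}) \cong G_{ab}^{y,x} \cong H_{ab}^{x,y}$, completing the proof.
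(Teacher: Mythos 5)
Your proposal is correct and follows essentially the same route as the paper's proof: it invokes \prpref{CompHom} and \lemref{generators-of-Q-2} to present $\Hom_A(H_b,G_a)$ as $Q_{\eta_b,\gamma_a}/(\gamma_a\M_2(A))$, reduces the generating set to $[\psi_1],[\psi_2]$, identifies the kernel of the induced surjection $A^2\twoheadrightarrow \Hom_A(H_b,G_a)$ with $\Im\gamma_{ab}$ by the same unwinding of the four scalar equations, and then obtains the remaining isomorphisms from \corref{Ext} and \obsref{opposite pair}. The only cosmetic difference is that you explicitly compute all four products $\gamma_aE_{ij}$ up front, whereas the paper records only the relations it needs.
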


\begin{proof}
  First note that by applying part (a) to the regular exact pair of
  zero divisors $y,x$ (instead of $x,y$), then part (b) follows from
  \obsref{opposite pair}.

  In part (a), the first isomorphism is by \corref{Ext}(a). To prove
  the second isomorphism, we first apply \prpref{CompHom} to get the
  representation,
  \begin{displaymath}
    \Hom_A(H_b,G_a) \cong Q_{\eta_b,\gamma_a}/(\gamma_aM_2(A)).
  \end{displaymath}
  Using this identification, we can consider the $A$-linear map,
  \begin{displaymath}
    A^2 \stackrel{\pi}{\longrightarrow} \Hom_A(H_b,G_a) \quad , \quad
    \begin{pmatrix}
      s \\ t
    \end{pmatrix}
    \longmapsto s[\psi_1]+t[\psi_2].
  \end{displaymath}
  Here $[\,\cdot\,]$ denotes coset with respect to the submodule
  $\gamma_aM_2(A)$, and $\psi_i$ are the matrices introduced in
  \lemref{generators-of-Q-2}.  Recall that $G_{ab} = \Coker
  \gamma_{ab}$; thus to prove the theorem, it suffices to show
  exactness of the sequence,
  \begin{displaymath}
    A^2 \stackrel{\gamma_{ab}}{\longrightarrow} 
    A^2 \stackrel{\pi}{\longrightarrow} \Hom_A(H_b,G_a)
    \longrightarrow 0.
  \end{displaymath}

  To show that $\pi$ is surjective, it suffices by
  \lemref{generators-of-Q-2} to see that each $[\psi_i]$ is in the
  image of $\pi$. However, this is clear since $[\psi_3]=-a[\psi_1]$
  and $[\psi_4]=[\psi_5]=[0]$.

  To see that $\pi\gamma_{ab}=0$, we must show that if
  $(s,t)=\gamma_{ab}(u,v)$, where $(u,v) \in A^2$, then
  $s\psi_1+t\psi_2$ is in $\gamma_aM_2(A)$. But as
  \begin{displaymath}
    \tag{\text{$\dagger$}}
    \begin{pmatrix}
      s \\ t
    \end{pmatrix}
    =
    \gamma_{ab}
    \begin{pmatrix}
      u \\ v
    \end{pmatrix} =
    \begin{pmatrix}
      x & ab \\ 
      0 & y
    \end{pmatrix}
    \begin{pmatrix}
      u \\ v
    \end{pmatrix}
    =
    \begin{pmatrix}
      ux+vab \\ 
      vy
    \end{pmatrix},
  \end{displaymath}
  the desired conclusion follows since,
  \begin{displaymath}
    s\psi_1+t\psi_2 = (ux+vab)
    \begin{pmatrix}
      0 & 1 \\ 
      0 & 0
    \end{pmatrix}
    + vy
    \begin{pmatrix}
      0 & 0 \\ 
      x & b
    \end{pmatrix} =
    \begin{pmatrix}
      0 & ux+vab \\ 
      0 & vyb
    \end{pmatrix} =
    \gamma_a
    \begin{pmatrix}
      0 & u \\ 
      0 & vb
    \end{pmatrix}.
  \end{displaymath}

  It remains to prove that $\Ker \pi \subseteq \Im \gamma_{ab}$. To
  this end, assume that $(s,t) \in A^2$ satisfies $\pi(s,t)=0$. This
  means that $s\psi_1+t\psi_2$ belongs to $\gamma_aM_2(A)$, that is,
  there exists $\xi = (c_{ij})$ such that $s\psi_1+t\psi_2 =
  \gamma_a\xi$; equivalently,
  \begin{displaymath}
    \tag{\text{$\ddagger$}}
    \begin{pmatrix}
      0 & s \\
      tx & tb
    \end{pmatrix}
    =
    \begin{pmatrix}
      c_{11}x+c_{21}a & c_{12}x+c_{22}a \\
      c_{21}y & c_{22}y
    \end{pmatrix}.
  \end{displaymath}

  From entry $(2,1)$ in $(\ddagger)$ we get that \mbox{$tx=c_{21}y$}.
  Since the exact pair of zero divisors $x,y$ is regular, see
  \dfnref{regular}, it follows that $t=vy$ for some $v$.

  From entry $(2,2)$ in $(\ddagger)$ we get that \mbox{$tb=c_{22}y$}.
  Combining this with \mbox{$t=vy$} one gets \mbox{$(vb-c_{22})y=0$}.
  Consequently, \mbox{$vb-c_{22}=px$} for some $p$. Now, inserting
  \mbox{$c_{22}=vb-px$} in the equation \mbox{$s= c_{12}x+c_{22}a$}
  coming from entry $(1,2)$ in $(\ddagger)$, we get that
  \mbox{$s=(c_{12}-pa)x+vab = ux+vab$}, where \mbox{$u=c_{12}-pa$}.

  Since $s=ux+vab$ and $t=vy$, we see from $(\dagger)$ that
  $(s,t)=\gamma_{ab}(u,v) \in \Im \gamma_{ab}$.
\end{proof}



It is also desirable to know what the module \mbox{$\Hom_A(G_u,G_v) \cong
  \Hom_A(H_v,H_u)$} (see \corref{Ext}(c) for this isomorphism) looks
like for every combination of $u$~and~$v$.  Since the author was not
able to figure this out, we restrict ourselves to the case where $u$
is in the ideal generated by $v$, or vice versa, see
\thmref{Hom-G-ab-a}.

\begin{lem}
  \label{lem:generators-of-Q}
  Let $x,y \in A$ be a regular exact pair of zero divisors.
  Furthermore, let $a \in A$ be weakly regular on the $A$-module
  $A/(x,y)$, and let $b \in A$ be any element. Then the $A$-submodule,
  \begin{displaymath}
    Q_{\gamma_{ab},\gamma_a} = \big\{\, \psi \in \M_2(A) \,\big|\, 
    \psi\gamma_{ab}=\gamma_a\xi 
    \text{ for some } \xi \in \M_2(A) \,\big\}
  \end{displaymath}
  of $\M_2(A)$ is generated by the following five matrices,
  \begin{displaymath}
    \psi_1 = 
      \begin{pmatrix}
        0 & 0 \\
        0 & x
      \end{pmatrix}, \
    \psi_2 = 
      \begin{pmatrix}
        1 & 0 \\
        0 & b
      \end{pmatrix}, \
    \psi_3 = 
      \begin{pmatrix}
        0 & x \\
        0 & 0
      \end{pmatrix}, \
    \psi_4 = 
      \begin{pmatrix}
        a & 0 \\
        y & 0
      \end{pmatrix}, \
    \psi_5 = 
      \begin{pmatrix}
        0 & a \\
        0 & y
      \end{pmatrix}.
  \end{displaymath}
\end{lem}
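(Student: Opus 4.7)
The plan is to mimic the proof of Lemma~\ref{lem:generators-of-Q-2}. The first step is to verify that each $\psi_i$ lies in $Q_{\gamma_{ab},\gamma_a}$ by exhibiting an explicit $\xi_i \in \M_2(A)$ with $\psi_i\gamma_{ab}=\gamma_a\xi_i$. Using $xy=0$, both $\psi_1$ and $\psi_3$ are actually annihilated by $\gamma_{ab}$, so $\xi_1=\xi_3=0$ work; for $\psi_2$ one can take $\xi_2=\psi_2$; for $\psi_4$ take $\xi_4=\operatorname{diag}(a,ab)$; and for $\psi_5$ take $\xi_5=\operatorname{diag}(0,y)$. These are direct matrix multiplications.

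For the generating statement, I would fix an arbitrary $\psi=(b_{ij})\in Q_{\gamma_{ab},\gamma_a}$ with a witness $\xi=(c_{ij})$ such that $\psi\gamma_{ab}=\gamma_a\xi$, which unpacks into the four scalar equations
\begin{align*}
b_{11}x &= c_{11}x+c_{21}a, & b_{11}ab+b_{12}y &= c_{12}x+c_{22}a, \\
b_{21}x &= c_{21}y, & b_{21}ab+b_{22}y &= c_{22}y.
\end{align*}
Comparing entries, writing $\psi=\sum_{i=1}^{5}f_i\psi_i$ amounts to producing $f_1,\dots,f_5\in A$ satisfying $b_{11}=f_2+f_4a$, $b_{12}=f_3x+f_5a$, $b_{21}=f_4y$, and $b_{22}=f_1x+f_2b+f_5y$.

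The extraction of the $f_i$ proceeds in four short steps. From the $(2,1)$ equation, $b_{21}x=c_{21}y$ lies in $(x)\cap(y)=0$, so $b_{21}\in\Ann_A(x)=(y)$ and I may write $b_{21}=f_4y$; setting $f_2=b_{11}-f_4a$ then immediately handles the first required identity. Next, the $(2,2)$ equation yields $(c_{22}-b_{22}-f_4ab)y=0$, hence $c_{22}=b_{22}+f_4ab+qx$ for some $q\in A$. Substituting this expression for $c_{22}$ into the $(1,2)$ equation and using $f_2=b_{11}-f_4a$ leads, after a short rearrangement, to $(f_2b-b_{22})a\in(x,y)$; because $a$ is weakly regular on $A/(x,y)$, this forces $b_{22}-f_2b=\alpha x+\beta y$ for some $\alpha,\beta\in A$. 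Setting $f_1=\alpha$ and $f_5=\beta$ settles the fourth identity, and plugging the decomposition back into the rearranged $(1,2)$ equation collapses it to $(b_{12}-\beta a)y\in(x)$; a second application of regularity of the pair (via \eqclbl{iii}$\Rightarrow$\eqclbl{ii} in Lemma~\ref{lem:regular}) produces $f_3\in A$ with $b_{12}-\beta a=f_3x$, giving the remaining identity.

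The main obstacle is bookkeeping: the hypothesis that $a$ is weakly regular on $A/(x,y)$ must be invoked at exactly the right moment, namely after $c_{22}$ has been eliminated between the $(1,2)$ and $(2,2)$ equations, and regularity of the pair is used \emph{twice} in slightly different guises (first to locate $b_{21}$ in $(y)$, then to locate $b_{12}-\beta a$ in $(x)$). Apart from this careful ordering of operations, the argument is purely computational and parallels the structure of Lemma~\ref{lem:generators-of-Q-2} step by step.
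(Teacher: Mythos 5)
Your proof is correct and follows essentially the same approach as the paper: verify that each $\psi_i$ lies in $Q_{\gamma_{ab},\gamma_a}$, unpack $\psi\gamma_{ab}=\gamma_a\xi$ into four scalar equations, extract $f_4$ and $f_2$ from the $(2,1)$ entry using the regularity of the pair, invoke weak regularity of $a$ on $A/(x,y)$ to produce $f_1$ and $f_5$, and use regularity of the pair once more to get $f_3$. The only difference is cosmetic bookkeeping — you eliminate $c_{22}$ via $(2,2)$ before invoking weak regularity on a $c_{22}$-free relation, whereas the paper invokes weak regularity directly on $(b_{11}b-c_{22})a\in(x,y)$ and then combines the two resulting equations — but the two relations differ only by a multiple of $x$, so this is the same argument.
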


\begin{proof}
  First note that if we define,
  \begin{displaymath}
    \xi_1 = 
      \begin{pmatrix}
        0 & 0 \\
        0 & 0
      \end{pmatrix}, \
    \xi_2 = 
      \begin{pmatrix}
        1 & 0 \\
        0 & b
      \end{pmatrix}, \
    \xi_3 = 
      \begin{pmatrix}
        0 & 0 \\
        0 & 0
      \end{pmatrix}, \
    \xi_4 = 
      \begin{pmatrix}
        a & 0 \\
        0 & ab
      \end{pmatrix}, \
    \xi_5 = 
      \begin{pmatrix}
        0 & 0 \\
        0 & y
      \end{pmatrix},
  \end{displaymath}
  then $\psi_i\gamma_{ab}=\gamma_a\xi_i$, and hence $\psi_i$ belongs to
  $Q_{\gamma_{ab},\gamma_a}$.

  Next we show that $\psi_1,\ldots,\psi_5$ generate all of
  $Q_{\gamma_{ab},\gamma_a}$. To this end, let $\psi=(b_{ij})$ be any
  matrix in $Q_{\gamma_{ab},\gamma_a}$, that is, there exists
  $\xi=(c_{ij})$ such that $\psi\gamma_{ab}=\gamma_a\xi$; i.e.
  \begin{displaymath}
    \tag{\text{$*$}}
    \begin{pmatrix}
      b_{11}x & b_{11}ab+b_{12}y \\
      b_{21}x & b_{21}ab+b_{22}y
    \end{pmatrix}
    =
    \begin{pmatrix}
      c_{11}x+c_{21}a & c_{12}x+c_{22}a \\
      c_{21}y & c_{22}y
    \end{pmatrix}.
  \end{displaymath}
  We must prove the existence of $f_1,\ldots,f_5$ in $A$ such that
  $\psi = \sum_{i=1}^5f_i\psi_i$, that is,
  \begin{displaymath}
    \tag{\text{$**$}}
    \begin{pmatrix}
      b_{11} & b_{12} \\
      b_{21} & b_{22}
    \end{pmatrix}
    =
    \begin{pmatrix}
      f_2+f_4a & f_3x+f_5a \\
      f_4y & f_1x+f_2b+f_5y
    \end{pmatrix}.
  \end{displaymath}

  From entry $(2,1)$ in $(*)$ we get that
  \mbox{$b_{21}x=c_{21}y$}.  By assumption, the exact pair of zero
  divisors $x,y$ is regular, see \dfnref{regular}, so it follows that
  $b_{21}=f_4y$ for some $f_4$, which gives entry $(2,1)$ in $(**)$.

  From entry $(1,2)$ in $(*)$ we get that
  $(b_{11}b-c_{22})a=c_{12}x-b_{12}y \in (x,y)$. As $a$ is weakly
  regular on $A/(x,y)$, there exist $p$ and $f_5$ such that
  \begin{displaymath}
    \tag{\text{$\dagger$}}
    b_{11}b-c_{22} = px-f_5y.
  \end{displaymath}

  From entry $(2,2)$ in $(*)$ we get
  \mbox{$b_{21}ab+b_{22}y=c_{22}y$}.  Since \mbox{$b_{21}=f_4y$}, it
  follows that \mbox{$(f_4ab+b_{22}-c_{22})y=0$}. Consequently, there
  exists $q$ such that
  \begin{displaymath}
    \tag{\text{$\ddagger$}}
    f_4ab+b_{22}-c_{22}=qx.
  \end{displaymath}
  
  Now define \mbox{$f_1=q-p$} and \mbox{$f_2=b_{11}-f_4a$}. Clearly,
  entry $(1,1)$ in $(**)$ holds. Furthermore, subtracting $(\dagger)$
  from $(\ddagger)$ and rearranging terms gives,
  \begin{displaymath}
    b_{22} = (q-p)x+(b_{11}-f_4a)b+f_5y = f_1x+f_2b+f_5y,
  \end{displaymath}
  which shows that also entry $(2,2)$ in $(**)$ holds.

  Finally, \mbox{$c_{12}x-b_{12}y = (b_{11}b-c_{22})a = (px-f_5y)a$}.
  The first equality is from entry $(1,2)$ in $(*)$, and the second is
  by $(\dagger)$.  Consequently,
  \mbox{$(b_{12}-f_5a)y=(c_{12}-pa)x$}. Since the exact pair of
  zero divisors $x,y$ is regular, see \dfnref{regular}, it follows
  that $b_{12}-f_5a=f_3x$ for some $f_3$.  Hence entry $(1,2)$ in
  $(**)$ holds. 
\end{proof}

\begin{thm}
  \label{thm:Hom-G-ab-a}
  Let $x,y \in A$ be a regular exact pair of zero divisors.
  Furthermore, let $a \in A$ be weakly regular on the $A$-module
  $A/(x,y)$, and let $b \in A$ be any element. Then there are
  isomorphisms of $A$-modules,
  \begin{prt}
  \item $\Hom_A(H_a,H_{ab}) \cong \Hom_A(G_{ab},G_a) \cong H_b$.
  \item $\Hom_A(G_a,G_{ab}) \cong \Hom_A(H_{ab},H_a) \cong G_b$.
  \end{prt}
\end{thm}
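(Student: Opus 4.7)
The plan is to mirror the proof of Theorem \ref{thm:Hom-H-G} closely. In both parts, the first isomorphism is immediate from Corollary \ref{cor:Ext}(c). Moreover, part (b) reduces to part (a) by applying (a) to the regular exact pair $y,x$ (instead of $x,y$) and invoking Observation \ref{obs:opposite pair}, since the hypothesis that $a$ is weakly regular on $A/(x,y) = A/(y,x)$ is symmetric in $x$ and $y$.

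Thus the remaining task is to prove $\Hom_A(G_{ab},G_a) \cong H_b$. Using Proposition \ref{prp:CompHom} I would represent this $\Hom$-module as $Q_{\gamma_{ab},\gamma_a}/(\gamma_a \M_2(A))$, and then take the five generators $\psi_1,\ldots,\psi_5$ supplied by Lemma \ref{lem:generators-of-Q}. A direct inspection shows that $\psi_3$, $\psi_4$, and $\psi_5$ already lie in $\gamma_a \M_2(A)$, so only the classes $[\psi_1]$ and $[\psi_2]$ are needed to span the quotient. This suggests defining an $A$-linear map $\pi \colon A^2 \to \Hom_A(G_{ab},G_a)$ by $(s,t) \mapsto s[\psi_1] + t[\psi_2]$, which is then automatically surjective. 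Since $H_b = \Coker \eta_b$, it suffices to establish exactness of the sequence $A^2 \xrightarrow{\eta_b} A^2 \xrightarrow{\pi} \Hom_A(G_{ab},G_a) \to 0$.

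The identity $\pi \eta_b = 0$ is a direct computation: for $(s,t) = (uy - vb, vx)$ one finds that $s\psi_1 + t\psi_2$ equals $\gamma_a$ times an explicit rank-one matrix, using only the relation $yx = 0$. The main obstacle is showing $\Ker \pi \subseteq \Im \eta_b$. Given $(s,t)$ with $\pi(s,t) = 0$, I would choose $\xi = (c_{ij})$ with $s\psi_1 + t\psi_2 = \gamma_a \xi$ and compare matrix entries. The $(2,1)$-entry forces $c_{21} \in \Ann_A(y) = (x)$, say $c_{21} = rx$; the $(1,1)$-entry then rewrites $t$ as $vx$ with $v = c_{11} + ra$. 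Substituting into the $(2,2)$-entry yields $(s+vb)x = c_{22}y$, and regularity of the pair $x,y$ (Definition \ref{dfn:regular}) forces $s + vb \in (y)$, so $s + vb = uy$ for some $u \in A$. Hence $(s,t) = \eta_b(u,v)$, which completes the argument.
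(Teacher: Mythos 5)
Your proposal is correct and follows the paper's own proof step for step: reduce (b) to (a) via Observation~\pgref{obs:opposite pair}, obtain the first isomorphism from Corollary~\pgref{cor:Ext}(c), represent $\Hom_A(G_{ab},G_a)$ via Proposition~\pgref{prp:CompHom} and Lemma~\pgref{lem:generators-of-Q}, note $[\psi_3]=[\psi_4]=[\psi_5]=0$, and verify exactness of $A^2 \xrightarrow{\eta_b} A^2 \xrightarrow{\pi} \Hom_A(G_{ab},G_a) \to 0$ by the same entrywise computation. The only difference from the paper is cosmetic (your $r$ is the paper's $p$).
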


\begin{proof}
  First note that by applying part (a) to the regular exact pair of
  zero divisors $y,x$ (instead of $x,y$), then part (b) follows from
  \obsref{opposite pair}.
  
  In part (a), the first isomorphism is by \corref{Ext}(c). To prove
  the second isomorphism, we apply \prpref{CompHom} to get the
  representation,
  \begin{displaymath}
    \Hom_A(G_{ab},G_a) \cong Q_{\gamma_{ab},\gamma_a}/(\gamma_aM_2(A)).
  \end{displaymath}
  Using this identification, we can consider the $A$-linear map,
  \begin{displaymath}
    A^2 \stackrel{\pi}{\longrightarrow} \Hom_A(G_{ab},G_a) \quad , \quad
    \begin{pmatrix}
      s \\ t
    \end{pmatrix}
    \longmapsto s[\psi_1]+t[\psi_2].
  \end{displaymath}
  Here $[\,\cdot\,]$ denotes coset with respect to the submodule
  $\gamma_aM_2(A)$, and $\psi_i$ are the matrices introduced in
  \lemref{generators-of-Q}. Recall that $H_b = \Coker \eta_b$, see
  \dfnref{GH}; thus to prove the result, it suffices to show
  exactness of the sequence,
  \begin{displaymath}
    A^2 \stackrel{\eta_b}{\longrightarrow} 
    A^2 \stackrel{\pi}{\longrightarrow} \Hom_A(G_{ab},G_a)
    \longrightarrow 0.
  \end{displaymath}

  To see that $\pi$ is surjective, it suffices by
  \lemref{generators-of-Q} to see that each $[\psi_i]$ belongs to
  $\Im\pi$.  However, this is clear since
  $[\psi_3]=[\psi_4]=[\psi_5]=[0]$.

  To see that $\pi\eta_b=0$, we must show that if $(s,t)=\eta_b(u,v)$,
  where $(u,v) \in A^2$, then $s\psi_1+t\psi_2$ is in
  $\gamma_aM_2(A)$. But as
  \begin{displaymath}
    \tag{\text{$\dagger$}}
    \begin{pmatrix}
      s \\ t
    \end{pmatrix}
    =
    \eta_b
    \begin{pmatrix}
      u \\ v
    \end{pmatrix} =
    \begin{pmatrix}
      y & -b \\ 
      0 & x
    \end{pmatrix}
    \begin{pmatrix}
      u \\ v
    \end{pmatrix}
    =
    \begin{pmatrix}
      uy-vb \\ 
      vx
    \end{pmatrix},
  \end{displaymath}
  it follows that
  \begin{displaymath}
    s\psi_1+t\psi_2 = (uy-vb)
    \begin{pmatrix}
      0 & 0 \\ 
      0 & x
    \end{pmatrix}
    + vx
    \begin{pmatrix}
      1 & 0 \\ 
      0 & b
    \end{pmatrix} =
    \begin{pmatrix}
      vx & 0 \\ 
      0 & 0
    \end{pmatrix} =
    \gamma_a
    \begin{pmatrix}
      v & 0 \\ 
      0 & 0
    \end{pmatrix},
  \end{displaymath}
  which belongs to $\gamma_aM_2(A)$. 

  It remains to prove that \mbox{$\Ker \pi \subseteq \Im \eta_b$}. To
  this end, assume that \mbox{$(s,t) \in A^2$} satisfies $\pi(s,t)=0$.
  This means that $s\psi_1+t\psi_2$ belongs to $\gamma_aM_2(A)$, that
  is, there exists $\xi = (c_{ij})$ such that
  $s\psi_1+t\psi_2=\gamma_a\xi$; equivalently,
  \begin{displaymath}
    \tag{\text{$\ddagger$}}
    \begin{pmatrix}
      t & 0 \\
      0 & sx+tb
    \end{pmatrix}
    =
    \begin{pmatrix}
      c_{11}x+c_{21}a & c_{12}x+c_{22}a \\
      c_{21}y & c_{22}y
    \end{pmatrix}.
  \end{displaymath}

  From entry $(2,1)$ in $(\ddagger)$ we get $c_{21}y=0$, and
  consequently \mbox{$c_{21}=px$} for some $p$.  Combining this with
  entry $(1,1)$ in $(\ddagger)$, we get $t=(c_{11}+pa)x = vx$
  for $v=c_{11}+pa$.

  Since \mbox{$t=vx$} we get from entry $(2,2)$ in $(\ddagger)$ that
  $(s+vb)x=c_{22}y$. As the exact pair of zero divisors $x,y$ is
  regular, see \dfnref{regular}, it follows that \mbox{$s+vb=uy$} for
  some $u$, that is, $s=uy-vb$.

  As $s=uy-vb$ and $t=vx$, we see from $(\dagger)$ that
  $(s,t)=\eta_b(u,v) \in \Im \eta_b$.
\end{proof}

\def\cprime{$'$}
  \newcommand{\arxiv}[2][AC]{\mbox{\href{http://arxiv.org/abs/#2}{\sf arXiv:#2
  [math.#1]}}}
  \newcommand{\oldarxiv}[2][AC]{\mbox{\href{http://arxiv.org/abs/math/#2}{\sf
  arXiv:math/#2
  [math.#1]}}}\providecommand{\MR}[1]{\mbox{\href{http://www.ams.org/mathscine%
t-getitem?mr=#1}{#1}}}
  \renewcommand{\MR}[1]{\mbox{\href{http://www.ams.org/mathscinet-getitem?mr=#%
1}{#1}}}
\providecommand{\bysame}{\leavevmode\hbox to3em{\hrulefill}\thinspace}
\providecommand{\MR}{\relax\ifhmode\unskip\space\fi MR }
\providecommand{\MRhref}[2]{%
  \href{http://www.ams.org/mathscinet-getitem?mr=#1}{#2}
}
\providecommand{\href}[2]{#2}


\begin{thebibliography}{10}

\bibitem{MAs67}
Maurice Auslander, \emph{Anneaux de {G}orenstein, et torsion en alg\`ebre
  commutative}, Secr\'etariat math\'ematique, Paris, 1967, S\'eminaire
  d'Alg\`ebre Commutative dirig\'e par Pierre Samuel, 1966/67. Texte
  r\'edig\'e, d'apr\`es des expos\'es de Maurice Auslander, par Marquerite
  Mangeney, Christian Peskine et Lucien Szpiro. \'Ecole Normale Sup\'erieure de
  Jeunes Filles. Available from \mbox{\sffamily http://www.numdam.org}.
  \MR{MR0225844}

\bibitem{AGP-97}
Luchezar~L. Avramov, Vesselin~N. Gasharov, and Irena~V. Peeva, \emph{Complete
  intersection dimension}, Inst. Hautes \'Etudes Sci. Publ. Math. (1997),
  no.~86, 67--114 (1998). \MR{MR1608565}

\bibitem{LLASBI07}
Luchezar~L. Avramov and Srikanth~B. Iyengar, \emph{Constructing modules with
  prescribed cohomological support}, Illinois J. Math. \textbf{51} (2007),
  no.~1, 1--20. \MR{MR2346182}

\bibitem{LLAAMr02}
Luchezar~L. Avramov and Alex Martsinkovsky, \emph{Absolute, relative, and
  {T}ate cohomology of modules of finite {G}orenstein dimension}, Proc. London
  Math. Soc. (3) \textbf{85} (2002), no.~2, 393--440. \MR{MR1912056}

\bibitem{BruHer}
Winfried Bruns and J{\"u}rgen Herzog, \emph{Cohen-{M}acaulay rings}, Cambridge
  Studies in Advanced Mathematics, vol.~39, Cambridge University Press,
  Cambridge, 1993. \MR{MR1251956}

\bibitem{CPST-08}
Lars~Winther Christensen, Greg Piepmeyer, Janet Striuli, and Ryo Takahashi,
  \emph{Finite {G}orenstein representation type implies simple singularity},
  Adv. Math. \textbf{218} (2008), no.~4, 1012--1026. \MR{MR2419377}

\bibitem{MR2363237}
Gert-Martin Greuel and Gerhard Pfister, \emph{A {\bf {s}ingular} introduction
  to commutative algebra}, extended ed., Springer, Berlin, 2008, With
  contributions by Olaf Bachmann, Christoph Lossen and Hans Sch{\"o}nemann,
  With 1 CD-ROM (Windows, Macintosh and UNIX). \MR{MR2363237 (2008j:13001)}

\bibitem{RCH93}
Raymond~C. Heitmann, \emph{Characterization of completions of unique
  factorization domains}, Trans. Amer. Math. Soc. \textbf{337} (1993), no.~1,
  379--387. \MR{MR1102888}

\bibitem{HS}
In{\^e}s~B. Henriques and Liana~M. {\c{S}}ega, \emph{Free resolutions over
  short {G}orenstein local rings}, preprint (2009), \arxiv{0904.3510v2}.

\bibitem{Mat}
Hideyuki Matsumura, \emph{Commutative ring theory}, second ed., Cambridge
  Studies in Advanced Mathematics, vol.~8, Cambridge University Press,
  Cambridge, 1989, Translated from the Japanese by M. Reid. \MR{MR1011461}

\bibitem{RTk07a}
Ryo Takahashi, \emph{An uncountably infinite number of indecomposable totally
  reflexive modules}, Nagoya Math. J. \textbf{187} (2007), 35--48.
  \MR{MR2354554}

\bibitem{Wei}
Charles~A. Weibel, \emph{An introduction to homological algebra}, Cambridge
  Studies in Advanced Mathematics, vol.~38, Cambridge University Press,
  Cambridge, 1994. \MR{MR1269324}

\end{thebibliography}

\end{document}